\definecolor{Maroon}{cmyk}{0, 0.87, 0.68, 0.32}
\definecolor{RoyalBlue}{cmyk}{1, 0.50, 0, 0}
\theoremstyle{plain}
\newtheorem{thm}{\protect\theoremname}[section]
  \theoremstyle{definition}
  \newtheorem{defn}[thm]{\protect\definitionname}
  \theoremstyle{definition}
  \newtheorem{example}[thm]{\protect\examplename}
  \theoremstyle{plain}
  \newtheorem{prop}[thm]{\protect\propositionname}
  \newtheorem{lemma}[thm]{\protect\lemmaname}
  \theoremstyle{definition}
  \theoremstyle{plain}
  \newtheorem{cor}[thm]{\protect\corollaryname}
  \theoremstyle{remark}
  \newtheorem{rem}[thm]{\protect\remarkname}
  \newtheorem{paragr}[thm]{}
  \providecommand{\corollaryname}{Corollary}
  \providecommand{\definitionname}{Definition}
  \providecommand{\examplename}{Example}
  \providecommand{\exercisename}{Exercise}
  \providecommand{\propositionname}{Proposition}
  \providecommand{\remarkname}{Remark}
  \providecommand{\theoremname}{Theorem}
  \providecommand{\lemmaname}{Lemma}
\global\long\def\lim{\operatorname*{lim}}
\global\long\def\map{\operatorname{Map}}
\newcommand{\ZZ}{\mathbf{Z}}
\newcommand{\QQ}{\mathbf{Q}}
\newcommand{\FF}{\mathbf{F}}
\newcommand{\PP}{\mathbf{P}}
\newcommand{\HH}{\mathbf{H}}
\newcommand{\et}{\mathrm{\acute{e}t}}
\newcommand{\uHom}{\mathit{Hom}}
\newcommand{\RHom}{\mathrm{RHom}}
\newcommand{\spec}[1]{\mathrm{Spec}(#1)}
\newcommand{\DM}{\mathit{DM}}
\newcommand{\h}{h}
\newcommand{\DMh}{\DM_\h}
\newcommand{\DMhlc}{\DM_{\h,lc}}
\newcommand{\unit}{\mathbf{1}}
\newcommand{\ind}{\mathrm{Ind}}
\newcommand{\huniv}{h_{\mathit{univ}}}
\title{Uniform approximation of Betti numbers}
\author[D.-C. Cisinski]{Denis-Charles Cisinski}
\address{Fakult{\"a}t f{\"u}r Mathematik,
	Universit{\"a}t Regensburg,
	93040 Regensburg,
	Germany}
\email{denis-charles.cisinski@ur.de}
\urladdr{https://cisinski.app.uni-regensburg.de/}
\begin{document}
\begin{abstract}
We prove that Lefschetz's principle of approximating the cohomology of a
possibly singular affine
scheme of finite type over a field by the cohomology of a suitable
(thickening of a)
hyperplane section can be made uniform: in the affine case,
we can choose the hyperplane section
independently of the cohomology. Using Jouanolou's trick,
this gives a new way to bound the Betti numbers of quasi-projective schemes
over a field, independently of the cohomology.
This is achieved through
a motivic version of Deligne's generic base change formula
and an axiomatic presentation of the theory of perverse sheaves.
These methods produce generating families of Voevodsky motivic sheaves
that are realized in perverse sheaves over
any base of equal characteristic.
\end{abstract}
%
%
\maketitle
\tableofcontents
\section*{Introduction}
Since Grothendieck introduced \'etale cohomology,
proving that the Betti numbers of $\ell$-adic cohomology
of algebraic varieties over a separably closed field
are independent of $\ell$
remains a deep open problem in positive characteristic.
The case of smooth and proper algeraic varieties
is reached by weight techniques, using Deligne's proof of the Weil conjectures,
but there is not any other general result in this direction.
The next best thing consists in providing uniform bounds of Betti
numbers. This can be done by d\'evissage from the case of
smooth and proper algebraic varieties,
and then applying descent techniques together with de Jong's alteration theorem,
as explained by Katz~\cite[Theorem~5]{Katz2}
who credits the result to de Jong and Berthelot.
There are results for non-constant coefficients as well, working with
compatible families \cite{KatzLaumon,Katz2,Orgo}.

The present notes give new proofs of existence of such uniform bound, using
motivic techniques (in particular,
we do not only prove independence of $\ell$ results
but rather independence against a large class of Weil cohomologies).
We apply Lefschetz's principle of approximating
a quasi-projective scheme $X$ by a suitable hyperplane section.
Of course, this needs adjustments in order to take into account the
singularities of $X$ as well as the fact that we want the choice of
the hyperplane section to be independent of the cohomology we work
with (e.g. $\ell$-adic cohomology of schemes of finite type over a
separably closed field $k$, with $\ell$ distinct from the characteristic of $k$,
or Berthelot's rigid cohomology if $k$ is perfect of characteristic $p>0$).

To be more precise, the general strategy goes as follows.
In this introduction, in order to simplify things,
we work over an infinite field $k$ of characteristic $p$.
Using Jouanolou's trick \cite[Lemme 1.5]{Jou}, since the formation
of Voevodsky motives is $\mathbf{A}^1$-homotopy invariant, the motive of any quasi-projective scheme is isomorphic to the motive of an affine one.
Therefore, we can focus on affine schemes of finite type over $k$.
For such a scheme $X$ of dimension $d$,
one defines the notion of Leschetz subscheme:
a closed subscheme $Z$ of $X$ of dimension $d-1$, with affine
complement, that contains all
irreducible components of $X$ of dimension $<d$ and the singular locus
of $X$, as well as
a hyperplane section satisfying a suitable transversality condition,
expressed in the language of the six operations for
motivic sheaves; see Definition \ref{def:Lefschetz filtration} below.
We define the motive of the pair $(X,Z)$ through the cofiber
sequence
\[
M(Z)\to M(X)\to M(X,Z)
\]
in Voevodsky's category $\DM_c(\spec k)$ (of
constructible \'etale motives).
We prove that, for any prime number $\ell\neq p$,
the $\ell$-adic realization of $M(X,Z)$ is concentrated in degree $-d$
and that its unique non-trivial cohomology group is a free $\ZZ_\ell$-module
of finite type, using the theory of $\ell$-adic
perverse sheaves\footnote{This is where the fact that we work with affine schemes matters: the main tool is then Artin's vanishing theorem in order
to prove Beilinson's transversality Theorem \ref{thm:Beilinson} below.};
see Lemma \ref{lemma:Lefschetz pairs are concentrated}.
Since the $\ell$-adic realization of motivic sheaves
is symmetric monoidal, it is compatible with the formation and the
computation of traces of endomorphisms of constructible motives,
in particular of Euler
characteristics (these are traces of identities).
In this case, this means that the Euler characteristic of $M(X,Z)[-d]$,
seen as a motive, coincides with the rank $\mathit{rk}(X,Z)$
of the unique non-trivial
cohomology group of its $\ell$-adic realization,
proving that the latter is independent of $\ell$.
This yields isomorphisms
\[
H^q_\et(\bar X,\ZZ_\ell)\cong H^q_\et(\bar Z,\ZZ_\ell)\quad\text{for $q\neq d, d-1$,}
\]
together with an exact sequence
\[
0\to H^{d-1}_\et(\bar X,\ZZ_\ell)
\to H^{d-1}_\et(\bar Z,\ZZ_\ell)\to \ZZ_\ell^{\mathit{rk}(X,Z)}\to
H^{d}_\et(\bar X,\ZZ_\ell)\to 0\, .
\]
In particular we get a bound of
the $d$th Betti number of $X$ by the motivically defined
non-negative integer $\mathit{rk}(X,Z)$,
and we observe that the Betti numbers of $X$ in lower degree are bounded
by those of $Z$.
This is meaningful because, following ideas from Beilinson~\cite{Beilinson},
we apply a motivic variant of Deligne's generic base change theorem,
to deduce a motivic shadow of the
weak Lefschetz theorem (Theorem \ref{thm:weak Lefschetz} below).
This implies that,
for any closed subscheme $Y\subset X$ of
dimension $<d$, there is a Lefchetz subscheme $Z\subset X$ containing $Y$
(this is where we need $k$ to be infinite: in fact, we prove that the family of
eligible hyperplane sections contains the rational points of an open
dense subscheme of the space of all hyperplane sections);
see Prop.~\ref{prop:existence Lefschetz pairs}.
Therefore, following a longstanding tradition in Algebraic Topology \cite{Milnor,Thom},
we can produce a Lefschetz filtration of $X$:
a finite sequence of closed subschemes
of the form
\[
\varnothing=X_{-1}\subset X_0\subset\ldots\subset X_{d-1}\subset X_d=X
\]
where, for $0\leq i\leq d$, each $X_{i-1}$ is a Lefschetz subscheme of $X_i$.
This leads to a filtration of the motive of $X$ whose associated graded
object yields a complex with terms of the form $M(X_i,X_{i-1})$ computing
the Euler characteristic of $X$
\[
\chi(X)=\sum_i(-1)^i\mathit{rk}(X_i,X_{i-1})
\]
and such that
the $i$th $\ell$-adic Betti number of the affine scheme $X$
(both $\QQ_\ell$- or with $\FF_\ell$-coefficients)
is bounded by $\mathit{rk}(X_i,X_{i-1})$ for all $\ell\neq p$;
see Theorem \ref{thm:Betti estimation}.

However, our proof does more than providing a bound that is
independent of $\ell$: it provides a uniform bound that is valid
in any realization of Voevodsky's motivic sheaves that has
a theory of perverse sheaves (including the properties that Verdier
duality induces an equivalence on constructible objects and that
Artin's vanishing theorem holds); see Definition \ref{def:Artin-Verdier}.
By developing this axiomatic approach, we also try to reach
a wide enough level of generality, geometrically:
instead of working over a field,
we develop enough material to prove all the main ingredients
over a scheme of finite type over a field: what is true over an infinite
field is in fact true generically, possibly after pulling
back along a flat finite cover of a
dense open subset of any algebraic variety (from this point
of view, we can work over any field $k$).
Because what is at stake, after all, is the problem of finding
big enough
families of motivic sheaves that are known to be realized
into ordinary sheaves or into
perverse sheaves.
This is what we do
at the end of this article (Prop.~\ref{prop:Lefschetz enough}
and \ref{prop:existence tight Lefschetz pairs}),
thus providing constructions of
$t$-structures on $\DM(X,\QQ)$ that are candidates for the
perverse motivic $t$-structure~\eqref{paragr:t-structures}.
These constructions, by finding nice generators for categories
of motivic sheaves, may be seen as a way
to provide uniform bounds of Betti numbers of all sorts,
not only for the cohomology
of algebraic varieties, but for the cohomology of any constructible
motivic sheaf (this includes
cohomology with compact support or not, as well
as nearby cycles, thanks to Ayoub's work \cite{ay}), possibly in relative situations.
Finding explicit generators for such $t$-structures is also part
of the standard tools to develop the theory of
Nori motives and to relate them with Voevodsky motives in characteristic zero.
Therefore, the results of the present text open the way to a
uniform theory of Nori motives \cite{HMS,Ara1,MI},
possibly over any field:
instead of working with a single cohomology theory, we can now consider the family
of all cohomology theories appearing in a six-functors formalism in which there is
a good notion of perverse sheaves. This is strongly related with Ayoub's
constructions~\cite{AyoubHopf3} of new Weil cohomologies in positive characteristic 
and their associated Hopf algebroids.
\nocite{Ara2}

\section{Motivic shadow of the weak Lefschetz Theorem}
\begin{paragr}\label{def:6FF}
We consider a $\ZZ$-linear six-functors formalism $D$ in the sense of
Voevodsky-Ayoub (see \cite{Univ6FF}), taking values in presentable $\infty$-categories, defined on the category of schemes locally of finite presentation over a base scheme $S$. 
We assume furthermore that, for any $X$, there is a full stable subcategory $D_c(X)$ of $D(X)$, whose objects are called \emph{constructible objects} with the following properties:
\begin{itemize}
\item[(i)] $D_c(X)$ contains the $\otimes$-unit $\unit_X$,
is closed under tensor product, under pullbacks of the form $f^*$ for any map $f$, and under direct image with compact support $f_!$ for any separated morphism of finite presentation $f$;
\item[(ii)] for any $X$ and any $F$ in $D_c(X)$, there is a dense open subscheme $U\subset X$ such that $F_{|U}$ is dualizable (in the sense of
\cite[ Section 4.6.1]{HA});
\item[(iii)] for any $X$, any dualizable object of $D(X)$ is constructible;
\item[(iv)] constructible objects form an hypersheaf of $\infty$-categories for the
\'etale topology.
\end{itemize}
Remark that, in the case of schemes with noetherian underlying space, these properties determine $D_c(X)$ uniquely.
\end{paragr}
\begin{example}\label{ex:def DM}
Let $\DM(X)=\ind(\DMhlc(X))$ the $\infty$-category of ind-objects on the $\infty$-category of locally constructible objects of the $\infty$-category of $\h$-motives
in the sense of \cite[Def. 6.3.1]{CD4}. Then $\DM$ satisfies all the properties above,
at least if we restrict to noetherian schemes (but this is not even necessary); this follows easily from see \cite[Theorem 6.3.26]{CD4}. 
For any commutative ring $\Lambda$, there is a $\Lambda$-linear version $\DM(X,\Lambda)$
defined as the $\Lambda$-modules in $\DM(X)$.

If $\Lambda$ is a commutative ring of characteristic $n>0$,
there is a rigidity theorem in the following sense:
for $n$ prime to the residue
characteristics of $X$, there is a canonical equivalence between compact $\Lambda$-modules in $\DM(X)$ and ordinary constructible sheaves of $\Lambda$-modules of finite tor-dimension over the small \'etale site of $X$:
\[
\DM_c(X,\Lambda)\cong D^b_{\mathrm{ctf}}(X,\Lambda)
\]
(see \cite[Theorem 6.3.11]{CD4}).
\end{example}
\begin{example}\label{ex:ladic}
Let $\ell$ be a prime number and define
\[
D^b_c(X,\ZZ_\ell)=\varprojlim_i \DMhlc(X,\ZZ/\ell^i\ZZ)
\]
where the limit is taken in the $\infty$-category of (stable) $\infty$-categories.
In the case where $\ell$ is prime to the residue characteristics of $X$,
the rigidity theorem imples that $D^b_c(X,\ZZ_\ell)$ is the usual derived category of
constuctible $\ell$-adic sheaves on the small \'etale site of $X$; see \cite[Prop.~7.2.21]{CD4}. The assignment
\[
X\mapsto D(X,\ZZ_\ell)=\ind(D^b_c(X,\ZZ_\ell))
\]
is then a six-functors formalism that satisfies all properties
listed in paragraph \ref{def:6FF}.
We observe that the dualizable objects
are then exactly those complexes whose cohomology objects are
what Deligne calls ``faisceaux localement constants tordus''.
Indeed, the functor
\[
D^b_c(X,\ZZ_\ell)\to D^b_c(X,\FF_\ell)\ , \quad F\mapsto F/\ell
\]
is conservative (by definition of what ``$\ell$-adic'' means)
and symmetric monoidal. This implies that it preserves and detects
dualizable objects. Therefore, by virtue of \cite[Remark 6.3.27]{CD4},
$F$ in $D^b_c(X,\ZZ_\ell)$ is dualizable if and only if $F/\ell$
is locally free with perfect values; for a discussion
using condensed techniques, see \cite{BS,HRS}.
We denote by $D(X,\QQ_\ell)$ the full subcategory of uniquely divisible
objects in $D(X,\ZZ_\ell)$. It is compactly generated with category of
compact objects 
\[
D^b(X,\QQ_\ell)=D^b_c(X,\ZZ_\ell)\otimes\QQ\, .
\]
\end{example}
\begin{example}\label{ex:ultra}
Let $\mathcal{U}$ be an ultrafilter on the set of prime numbers.
We define for any noetherian $X$:
\[
D^b_c(X,\FF_\mathcal{U})'=\varinjlim_{A\in\mathcal{U}}\prod_{\ell\notin A}\DMhlc(X,\FF_{\ell})
\]
(where the colimit and the product are taken in the $\infty$-category
of (stable) $\infty$-categories). 
We define $D^b_c(X,\FF_\mathcal{U})$ as the full subcategory
of $D^b_c(X,\FF_\mathcal{U})'$ by noetherian induction:
an object $F$ of $D^b_c(X,\FF_\mathcal{U})'$ belongs to
$D^b_c(X,\FF_\mathcal{U})$ if there is a dense open subscheme $V$ of $U$
such that the restriction $F_{|_V}$ is dualizable in $D^b_c(V,\FF_\mathcal{U})'$
and $F$ is constructible on a complement of $V$.
We define finally
\[
D(X,\FF_\mathcal{U})=\ind(D^b_c(X,\FF_\mathcal{U}))\, .
\]
If the set of residue characteristics of $X$ is finite, $D^b_c(X,\FF_\mathcal{U})'$
coincides with the $\mathcal U$-indexed ultraproduct of the ordinary categories of
constructible sheaves $D^b_c(X,\FF_\ell)$. 
If the set of residue characteristics is finite (e.g. is X is of finite type over a
semi-local scheme), then $D^b_c(X,\FF_\mathcal{U})'$ coincides with the
ultraproduct of the usual categories of \'etale sheaves $D(X,\FF_\ell)$,
where $\ell$ runs over all primes distinct from the residue characteristics of $X$.
The corresponding cohomology is then the ultraproduct in the usual sense of
\'etale cohomology groups:
\[
\pi_0\map_{D(X,\FF_\mathcal{U})}(\mathbf{1}_X,\mathbf{1}_X[i])\cong
\varinjlim_{A\in\mathcal{U}}\prod_{\ell\notin A}H^i_\et(X,\FF_\ell)\, .
\]
For an arbitrary scheme $X$, by virtue of \cite[Cor. 4.5.3]{CD4}, we always have
\[
\DMhlc(X,\FF_\ell)\cong D^b_c(X[1/\ell],\FF_\ell)
\]
where $X[1/\ell]$ is the product of $X$ with the spectrum of $\ZZ[\frac{1}{\ell}]$. Therefore, we have
\[
\pi_0\map_{D(X,\FF_\mathcal{U})}(\mathbf{1}_X,\mathbf{1}_X[i])\cong
\varinjlim_{A\in\mathcal{U}}\prod_{\ell\notin A}H^i_\et(X[1/\ell],\FF_\ell)\, .
\]
The only non-trivial property of \ref{def:6FF} that is difficult to check is (i). In the case of quasi-excellent schemes over a semi-local noetherian ring, this follows right away from Orgogozo's uniform constructibility theorem \cite[Th\'eor\`eme 3.1.1]{Orgo}.
\end{example}
\begin{example}
If the base field has a complex embedding $k\hookrightarrow\mathbf{C}$,
one can define the Betti six-functors formalism
\[
X\mapsto D_{\textit{Betti}}(X)=D(X(\mathbf{C}),\ZZ)
\]
by taking derived $\infty$-categories of sheaves of abelian groups
on the analytic variety associated to $X$. The associated
cohomology is Betti cohomology.
\end{example}
\begin{example}
If $k$ is of characteristic zero, one can define the de Rham
six-functors formalism over schemes of finite type over $k$
through the theory of algebraic $D$-modules.
The corresponding cohomology is algebraic de Rham cohomology.
\end{example}
\begin{example}
If $k$ is a perfect field of characteristic $p>0$, choosing a
complete discrete valuation ring of characteristic zero with residue field $k$
(such as the ring of Witt vectors), Berthelot's theory of arithmetic $D$-modules
defines a six-functors formalism for schemes of finite type over $k$.
The corresponding cohomology is then Berthelot's rigid cohomology
(i.e. it coincides with Monsky-Washnitzer cohomology for affine smooth $k$-schemes
and with crystalline cohomology for smooth and projective ones).
\end{example}
\begin{paragr}
If $D$ and $D'$ are two six-functors formalisms defined for schemes over $S$,
there is a natural notion of morphism of six-functors formalisms from $D$ to $D'$;
see \cite{Univ6FF}.
Informally, this consists of colimit-preserving (hence exact) symmetric monoidal functors
\[
D(X)\to D'(X)
\]
that are compatible with pullbacks $f^*$ for all $f$ as well with direct image with compact support $f_!$ for any $f$ separated and of finite presentation.
In particular, on noetherian schemes, such morphisms must preserve constructible
objets (they preserve dualizable objects, which implies that they preserve constructible objects by noetherian induction). One can prove that $\DM$ is the initial object among
$\ZZ$-linear
six-functors formalisms satisfying \'etale descent\footnote{One can see that $\DMh$ is initial among $\ZZ$-linear six-functors formalisms satisfying \'etale hyperdescent:
since we know what is the initial six-functors formalism by virtue of Drew and Gallauer's
\cite[Theorem 7.14]{Univ6FF},
we see easily that it suffices to prove it for $\QQ$-linear coefficients, which follows right away from \cite[Theorem 16.1.2 and 16.2.18]{CD3},
and for finite coefficients, which follows from the combination of
rigidity theorems: the one recalled in Example \ref{ex:def DM}
together with those stated in \cite{AyoubEtale,Bachmann}. This implies
that $\DM$ is the initial object among $\ZZ$-linear six-functors formalisms satisfying the properties of \ref{def:6FF}. For $X$ noetherian with residue fields that have a uniform bound on their Galois cohomological dimension, $\DM$ and $\DMh$ agree anyway.}. In particular, for any other
six-functor formalism $D$, there is a unique morphism
\[
r_D\colon\DM(-)\to D(-)\, .
\]
We call an object $F$ of $D(X)$ \emph{strictly of geometric origin} if there is a constructible
motivic sheaf $M$ in $\DM_c(X)$ such that $F\cong r_D(M)$ in $D(X)$ (in particular,
such an $F$ is constructible). We will say that $D$ \emph{behaves well} over $S$ if
the functor $r_D$ commutes with functors of the form $f_*$ for any morphism
of finite presentation $f$.
All examples of six-functors formalisms given in the present notes are known to
behave well: in the case where $D$ is $\QQ$-linear and defined on schemes
of finite type over a field,
this is subsumed by
\cite[Theorem 4.4.25]{CD3} -- see Remark \ref{rem:gbc} below
to drop the $\QQ$-linearity. For $D=D(-,\Lambda)$ with $\Lambda$ of positive
characteristic, this is obvious, by rigidity.
This is also true in the case where $D(X)=D(X,\ZZ_\ell)$
for any $S$ (see \cite[Theorem 7.2.11]{CD4}), or $D(X)=D(X,\FF_\mathcal{U})$ (exercise).

The property of being dualizable is at the core of many constructions and computations.
In particular, the fact that any construcible sheaf becomes dualizable
over a dense open subscheme is at the center of everything.
For objects that are strictly of geometric origin, the choice
of such a dualizibility locus can be made motivically,
hence, independently of $D$. This is made precise in the following statement.
\end{paragr}
\begin{prop}\label{prop:generic dualizability}
Let $D$ be six-functors formalism defined over $S$-schemes.
If $F$ is strictly of geometric origin in $D(X)$, then one can find
a dense open subscheme $U$ of $X$, independently of $D$,
such that the restriction $F_{|U}$ is dualizable in $D(U)$.
\end{prop}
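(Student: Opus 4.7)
The plan is to reduce this immediately to the analogous property for the universal six-functors formalism $\DM$ itself. By definition of ``strictly of geometric origin'', we are given a constructible motivic sheaf $M\in\DM_c(X)$ together with an isomorphism $F\cong r_D(M)$ in $D(X)$. Since $\DM$ itself satisfies the axioms of paragraph~\ref{def:6FF} (Example~\ref{ex:def DM}), property~(ii) applied to $M\in\DM_c(X)$ produces a dense open subscheme $U\subseteq X$, depending only on $M$, such that $M_{|U}$ is dualizable in $\DM(U)$. This choice of $U$ is manifestly independent of $D$, which is the whole point.

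Next, I would use that the realization $r_D\colon\DM(-)\to D(-)$ is a morphism of six-functors formalisms, so that for each scheme $Y$ the functor $r_D\colon\DM(Y)\to D(Y)$ is symmetric monoidal, and the square with $f^*$ (in the case at hand, the open immersion $U\hookrightarrow X$) commutes up to canonical equivalence. Since symmetric monoidal functors automatically preserve dualizable objects (dualizability is characterized by the existence of coevaluation and evaluation maps satisfying the triangle identities, all of which are transported by any symmetric monoidal functor), we conclude that
\[
F_{|U}\cong r_D(M_{|U})
\]
is dualizable in $D(U)$.

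There is essentially no obstacle here: the statement is the formal shadow of the fact that ``dualizable'' is a property detected by any symmetric monoidal functor, combined with the observation that the universal realization $r_D$ lets us transfer the generic dualizability locus from $\DM$ to $D$. If one wants to be pedantic, the only point worth a line in the writeup is invoking Example~\ref{ex:def DM} to justify that $\DM$ itself satisfies property~(ii), so that the witness $U$ may be chosen motivically.
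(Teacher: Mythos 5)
Your proof is correct and is essentially identical to the paper's: choose the dualizability locus $U$ for the underlying motive $M$ in $\DM_c(X)$, then transport along the symmetric monoidal realization $r_D$, which preserves dualizable objects. The paper's proof is the same argument, just slightly terser.
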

\begin{proof}
Let $M$ be constructible in $\DM(X)$ such that $F\cong r_D(M)$.
We choose a dense open immersion $j\colon U\to X$ such that $M_{|U}=j^*(M)$
is dualizable in $\DM(X)$. Since the functor
\[
r_D\colon\DM(U)\to D(U)
\]
is symmetric monoidal, it preserves dualizable objects, and therefore
$F_{|U}\cong r_D(M_{|U})$ is dualizable. The open subscheme $U$ clearly
depends on $M$ only.
\end{proof}

\begin{paragr}
The following statement,
is a generalization of Deligne's generic base change theorem
for torsion \'etale sheaves \cite[Th.~Finitude, 1.9]{SGA4demi}.
With this new degree of generality, it may be seen as an independence
of $D$ (in particular, independence of $\ell$) result as well;
see Corollary \ref{cor:gbc}.
\end{paragr}
\begin{thm}[Deligne's generic base change formula]\label{thm:gbc}
Let $f:X\to Y$ be a morphism
between separated schemes of finite type over a noetherian base scheme $S$.
Let $F$ be a constructible object in $D(X)$. Then there
is a dense subscheme $U\subset S$ over which $f_{*}(F)$
becomes constructible and its formation
is compatible with any base-change.
More precisely, for each $w:S'\to S$ factoring through $U$ we have 
an isomorphism of constructible objects
\[
v^{*}f_{*}(F)\cong f'_{*}u^{*}(F)
\]
where 
\[
\begin{tikzcd}
X'\ar{r}{u}\ar{d}{f'} & X\ar{d}{f}\\
Y'\ar{r}{v}\ar{d} & Y\ar{d}\\
S'\ar{r}{w} & S
\end{tikzcd}
\]

is the associated pull-back diagram. 
\end{thm}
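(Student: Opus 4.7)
The plan is to adapt Deligne's classical strategy to the axiomatic six-functors setting of~\ref{def:6FF}, proceeding by a sequence of reductions. By noetherian induction on $S$, it suffices to exhibit a single nonempty open $U \subseteq S$ over which both conclusions hold: the complement $S \setminus U$ is noetherian of strictly smaller Krull dimension, and the inductive hypothesis applied to the corresponding base change over $S \setminus U$ yields a further dense open there, so their union gives the required dense subscheme. We may therefore assume $S$ is integral and argue locally near its generic point.

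Using Nagata compactification, I would then factor $f = p \circ j$ with $j \colon X \hookrightarrow \bar X$ an open immersion and $p \colon \bar X \to Y$ proper. For the proper $p$ one has $p_* \cong p_!$, and proper base change $v^* p_! \cong p'_! v'^*$ holds unconditionally in the formalism, so the statement for $f_* = p_! \circ j_*$ reduces to the analogous statement for the open immersion $j$. We may thus assume $f$ itself is an open immersion.

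With this in hand I would argue by noetherian induction on the dimension of the support of $F$ in $X$. By axiom (ii) of~\ref{def:6FF}, there is a dense open $V \subseteq X$ on which $F|_V$ is dualizable. Letting $i \colon Z \hookrightarrow X$ denote the closed complement and $j_V \colon V \hookrightarrow X$ the open inclusion, the localization triangle
\[
i_{*} i^{!} F \to F \to (j_V)_{*} F|_V
\]
splits the problem after applying $f_*$: the leftmost term has support of strictly smaller dimension and is handled by the inductive hypothesis, reducing us to the case where $F$ is dualizable on all of $X$.

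The technical heart of the proof, and the step I expect to be the main obstacle, is this remaining case of dualizable $F$. Dualizability allows one to rewrite $f_* F$ using the internal Hom and the various exchange isomorphisms built into the formalism, keeping every intermediate object constructible by axioms (i)--(iii), and converts parts of the computation into statements about $f_!$-type constructions whose formation commutes with base change unconditionally. Concretely, one can fix a stratification of the complement $\bar X \setminus X$ and argue stratum by stratum: on each stratum the base change transformation $v^* f_* F \to f'_* u^* F$ becomes an isomorphism after restricting $S$ to a suitable dense open, and the desired $U \subseteq S$ is obtained as the finite intersection of these opens.
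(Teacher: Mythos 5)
The paper does not prove this theorem from scratch: the proof is a citation to \cite[Theorem 2.4.2]{MotGenBC}, with the observation that the argument there uses only the axioms of \ref{def:6FF}. Your attempt to reconstruct Deligne's proof is therefore more ambitious, and the opening reductions you make are standard and correct: noetherian induction on $S$, Nagata compactification to reduce $f_*$ to $j_*$ for $j$ an open immersion, and d\'evissage on the dimension of the support of $F$ to reduce to the dualizable case. But the final step, which you yourself flag as the ``technical heart,'' is where the argument dissolves. You claim that dualizability ``converts parts of the computation into statements about $f_!$-type constructions whose formation commutes with base change unconditionally,'' and that one can then ``argue stratum by stratum'' on $\bar X \setminus X$. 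Neither claim is substantiated, and neither is how the proof works. Passing from $f_*$ to something $f_!$-like would require Verdier duality to be an equivalence, which is not one of the axioms in \ref{def:6FF} (it is an extra \emph{Artin-Verdier} hypothesis imposed only later, in Definition \ref{def:Artin-Verdier}). And stratifying the boundary buys you nothing: the boundary restriction $i^* j_* F$ is precisely the hard local cohomology/nearby-cycle data that the theorem controls, and there is no localization triangle that lets you reason about it stratum by stratum.

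What your sketch is missing is the actual engine of Deligne's argument, which Remark \ref{rem:gbc} and the surrounding discussion in the paper point to explicitly: \'etale descent is used crucially, through the fact that $f_!$ and $f_*$ are conservative for $f$ quasi-finite. Combined with a fibration of $\bar X$ over $S$ by chains of maps with fibers of dimension $\leq 1$ (projecting through affine or projective lines), this reduces the theorem to a relative-curve case which is then handled directly. That reduction to relative curves and the descent argument are the real proof; they are entirely absent from your outline. There is also a circularity risk in the d\'evissage you propose: to run the induction on the triangle $i_* i^! F \to F \to (j_V)_*(F|_V)$ you need $i^! F$ to be constructible, but in this paper the constructibility of $i^!$ on constructible objects is \emph{deduced} from generic base change (see the paragraph following Lemma \ref{lemma:restr. right t-exact}), so this input cannot be taken for granted. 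The safer triangle is $j_!\, j^* F \to F \to i_*\, i^* F$, whose terms are constructible by axiom (i), but you would still have to say what to do with $f_*\, j_!\, j^* F$, and that is exactly the dualizable open-immersion case where the real work lies.
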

\begin{proof}
See \cite[Theorem 2.4.2]{MotGenBC} (the proof is stated for $D(X)=\DM(X)$
but it only uses the properties listed in paragraph \ref{def:6FF}).
The statement in \emph{loc. cit.} does not say that $f_*(F)$ becomes
constructible over $U$, but this is what the proof yields anyway.
\end{proof}
\begin{rem}
The proof of the generic base change formula really uses \'etale descent
on constructible sheaves:
this implies that, for any quasi-finite morphism $f$. the functors $f_!$ and $f_*$
are conservative, which is a crucial ingredient in Deligne's proof.
It is not known whether the generic base change formula is true
or not for more universal theories of motivic sheaves in which we only have
Nisnevich descent, such as Voevodsky's and Morel's $\mathit{SH}$.
\end{rem}
\begin{rem}\label{rem:gbc}
In fact, a close examination of the proof
of the generic base change formula shows that, for a given constructible
$F$ in $D(X)$ and a morphism of finite presentation $f:X\to Y$ over some base $S$,
one can choose the open subset $U\subset S$ so that the conclusion of the
generic base change theorem holds for $f_*(F)$ in $D$ and so that, for any morphism
of six-functors formalisms $\varphi\colon D\to D'$ (with $D'$ satisfying the
assumption of \ref{def:6FF} as well), the canonical map
\[
\varphi(f_*(F))\to f_*(\varphi(F))
\]
becomes an isomorphism over $U$ and the conclusion of the generic base change theorem
holds for $\varphi(F)$ with respect to $f$ in $D'$ (the idea is that we can do the proof of the generic base change formula for $F$ in $D$ and for $\varphi(F)$ in $D'$
in parallel and observe that $\varphi$ is compatible with
each reduction step, thus giving rise to the same choice of locus $U$ over which
the base change formula holds). 
\end{rem}
\begin{cor}\label{cor:gbc}
If moreover $F$ is strictly of geometric
origin, then the open subscheme $U\subset S$ in the statement
of Theorem \ref{thm:gbc} may be chosen indepently of $D$.
\end{cor}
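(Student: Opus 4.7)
The plan is to lift everything to $\DM$ and invoke the strengthened form of generic base change recorded in Remark \ref{rem:gbc}. Since $F$ is strictly of geometric origin, I would fix a constructible object $M\in\DM_c(X)$ with $F\cong r_D(M)$, where $r_D\colon\DM\to D$ is the canonical morphism of six-functors formalisms (which exists and is unique by the initiality of $\DM$ discussed above).

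Next, I would apply Remark \ref{rem:gbc} to the pair $(M,f)$ inside $\DM$: this produces a dense open subscheme $U\subset S$ such that the conclusion of Theorem \ref{thm:gbc} holds for $f_*(M)$ in $\DM$ over $U$ and, simultaneously, for \emph{any} morphism of six-functors formalisms $\varphi\colon\DM\to D'$ (with $D'$ satisfying the assumptions of \ref{def:6FF}), the canonical comparison map $\varphi(f_*(M))\to f_*(\varphi(M))$ is an isomorphism over $U$ and the generic base change formula holds for $\varphi(M)$ with respect to $f$ in $D'$ over $U$. The crucial point is that this $U$ is determined by the motivic datum $(M,f)$ alone, independently of any particular realization $\varphi$. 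Specializing to $\varphi=r_D$, we get $r_D(f_*(M))\cong f_*(F)$ over $U$ and the conclusion of Theorem \ref{thm:gbc} for $F$ with respect to $f$ in $D$ over the same $U$. Since this $U$ is forced upon every target formalism $D$ through its canonical realization from $\DM$, the asserted independence of $D$ follows.

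There is essentially no obstacle to overcome here: the substantive content has already been packaged in Remark \ref{rem:gbc}, whose justification amounts to the observation that the noetherian/geometric reduction steps in the proof of Theorem \ref{thm:gbc} are compatible with any colimit-preserving symmetric monoidal functor commuting with the six operations. All that is left to record for the corollary is this compatibility applied to the canonical realization $r_D$.
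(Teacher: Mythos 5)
Your proof is correct and follows essentially the same route as the paper: lift $F$ to a motivic witness $M\in\DM_c(X)$, run the generic base change argument motivically to fix $U$, and invoke Remark~\ref{rem:gbc} to transport the conclusion along $r_D$ (and any other realization) over the same $U$. The only difference is cosmetic — you spell out the quantification over $\varphi$ in Remark~\ref{rem:gbc} slightly more explicitly than the paper does.
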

\begin{proof}
Let $M$ in $\DM_c(X)$ such that $F\cong r_D(M)$ in $D(X)$.
Apply the generic base change formula to $M$, which gives an open subscheme $U\subset S$
such that the conclusion of Theorem \ref{thm:gbc} holds in $\DM$.
The conclusion of Theorem \ref{thm:gbc} holds for $r_D(M)$
in $D$ as well, by the previous Remark.\footnote{For the reader
who does not want to verify the claims of Remark \ref{rem:gbc},
Corollary \ref{cor:gbc}
can be proved as follows in the case where $D$ behaves well:
the conclusion
of Theorem \ref{thm:gbc} holds for $F=r_D(M)$ in $D$
with respect to the motivically
chosen $U\subset S$: since $r_D\colon\DM\to D$ commutes with the
six operations, all the base change isomorphisms involving $M$ in $\DM$ will yield
analogous base change isomorphisms involving $F$ in $D$.}
\end{proof}
\begin{paragr}
We fix a six-functors formalism $D$ defined over $S$-schemes,
as in paragraph \ref{def:6FF}. Let $E$ be a vector bundle over $S$
and $\PP=\PP(E)$ the associated projective bundle over $S$.
We denote by $\PP^\wedge$ the projective bundle associated to the dual of $E$,
and we assume given a diagram of $S$-schemes of finite presentation
\[
X\overset{e}{\to}\bar X\overset{\bar f}{\to} \PP
\]
(in practice, $e$ will be an open immersion and $\bar X$ will be a compactification
of $X$ as an $S$-scheme, but, for the moment, $e$ and $\bar f$ could be any maps).
We let
\[
\huniv\colon\HH\hookrightarrow\PP\times_S\PP^\wedge
\]
be the universal hyperplane in $\PP$, and write
\[
\PP\overset{\pi}{\leftarrow}\PP\times_S\PP^\wedge\overset{\pi^\wedge}{\to}\PP^\wedge
\]
for the canonical projections. We consider the following diagram, in which all
squares are cartesian.
\[
\begin{tikzcd}
H_X\ar{r}{\eta}\ar[hook]{d}{h_X}&
H_{\bar{X}}\ar{r}\ar[hook]{d}{h_{\bar X}}&\HH\ar[hook]{d}{\huniv}&\\
X\times_S\PP^\wedge\ar{r}{\gamma}\ar{d}{p}& \bar X\times_S\PP^\wedge\ar{r}\ar{d}{\bar p}&\PP\times_S\PP^\wedge\ar{r}{\pi^\wedge}\ar{d}{\pi}&\PP^\wedge\ar{d}\\
X\ar{r}{e}&\bar X\ar{r}&\PP\ar{r}&S
\end{tikzcd}
\]
Observe that both projections $\HH\to\PP$ and $\HH\to\PP^\wedge$
are smooth. Given a constructible object $F$ in $D(X)$,
we then have isomorphisms
\[
\bar{p}^*\, e_*(F)\cong \gamma_*\, p^*(F)
\]
and
\[
h^*_{\bar X}\, \gamma_*\, p^*(F)\cong \eta_*\, h^*_X\, p^*(F)\, .
\]
The first one is an instance of the smooth base change isomorphism,
whereas the second one comes from the following sequence of isomorphisms
(using the smoothness of $\HH$ over the projective bundle $\PP$):
\begin{align*}
h^*_{\bar X}\, \gamma_*\, p^*(F)&\cong h^*_{\bar X}\, \bar{p}^*\, e_*(F)\\
&\cong (\bar{p}h_{\bar{X}})^*\, e_*(F)\\
&\cong \eta_*\, (p h_X)^*(F)\\
&\cong \eta_*\, h^*_X\, p^*(F)\, .
\end{align*}
This leads to the following statement that goes back to
Beilinson~\cite{Beilinson} and Katz~\cite{Katz1}.
\end{paragr}
\begin{thm}[shadow of the weak Lefschetz theorem]\label{thm:weak Lefschetz}
Under the assumptions above, there exists a dense open subscheme $U\subset\PP^\wedge$
with the following property: for any hyperplane $H_t$ in $\PP$
defined over an $S$-scheme $T$
corresponding to a map $t\colon T\to U$ over $S$, if we form the corresponding
diagram of cartesian squares
\[
\begin{tikzcd}
X_T - H\ar[hook]{r}{j}\ar{d} & X_T\ar{d}{e_T}& H\ar{d}{u}\ar[hook']{l}[swap]{i}\\
\bar X_T - \bar H\ar[hook]{r}{\bar \jmath}
& \bar X_T & \bar H \ar[hook']{l}[swap]{\bar\imath}
\end{tikzcd}
\]
in which $X_T$ and $\bar X_T$ are the pullbacks of $X$ and $\bar X$ over $T\to S$
respectively,
and $\bar H$ is the pullback of $H_t$ along the projection
$H_{\bar X}\to\PP^\wedge$, we have a canonical isomorphism
\[
\bar \jmath_!\, \bar \jmath^*\, (e_T)_*(F_T)\cong (e_T)_*\, j_!\, j^*(F_T)
\]
with $F_T$ the pullback of $F$ along the projection $X_T\to X$.
Moreover, if ever $F$ is strictly of geometric origin, the subscheme $U$
can be chosen independently of $D$.
\end{thm}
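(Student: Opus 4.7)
The plan is to reduce the claimed isomorphism to a single base-change assertion for the closed immersion $\bar\imath\colon\bar H\into\bar X_T$, and then to secure this assertion on a dense open locus of hyperplanes by two applications of Theorem \ref{thm:gbc} taken with respect to the parameter base $\PP^\wedge$.

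For the reduction, apply $(e_T)_*$ to the localisation cofiber sequence $j_!j^*F_T\to F_T\to i_*i^*F_T$ on $X_T$ and compare it with the localisation cofiber sequence $\bar\jmath_!\bar\jmath^*(e_T)_*F_T\to (e_T)_*F_T\to \bar\imath_*\bar\imath^*(e_T)_*F_T$ on $\bar X_T$. Using $e_T\circ i=\bar\imath\circ u$ to identify $(e_T)_*i_*i^*F_T$ with $\bar\imath_*u_*i^*F_T$, and using that $\bar\imath_*$ is fully faithful, the canonical comparison between these cofiber sequences is an isomorphism exactly when the base-change transformation $\bar\imath^*(e_T)_*F_T\to u_*i^*F_T$ is an isomorphism. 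So it is enough to produce the latter over a dense open of $\PP^\wedge$.

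To that end, apply Theorem \ref{thm:gbc} in the base $\PP^\wedge$ to $\gamma\colon X\times_S\PP^\wedge\to\bar X\times_S\PP^\wedge$ with the constructible input $p^*F$: this produces a dense open $U_1\ss\PP^\wedge$ over which the formation of $\gamma_*p^*F$ commutes with any base change, so that for $t\colon T\to U_1$ the pullback of $\gamma_*p^*F$ along $\bar X_T\to\bar X\times_S\PP^\wedge$ recovers $(e_T)_*F_T$. A second application, to $\eta\colon H_X\to H_{\bar X}$ with the constructible input $h_X^*p^*F$, yields a dense open $U_2\ss\PP^\wedge$ such that for $t\colon T\to U_2$ the pullback of $\eta_*h_X^*p^*F$ along $\bar H\to H_{\bar X}$ is $u_*i^*F_T$.

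Set $U=U_1\cap U_2$ and invoke the identification $h_{\bar X}^*\gamma_*p^*F\cong\eta_*h_X^*p^*F$ established just before the statement. For any $t\colon T\to U$, commutation of $*$-pullbacks along the cartesian square relating $\bar H$, $\bar X_T$, $H_{\bar X}$ and $\bar X\times_S\PP^\wedge$ chains the two base-change isomorphisms into $\bar\imath^*(e_T)_*F_T\cong u_*i^*F_T$, which is exactly what the reduction requires. For the final assertion, if $F$ is strictly of geometric origin then so are $p^*F$ and $h_X^*p^*F$, since $r_D$ is a morphism of six-functor formalisms and therefore commutes with $*$-pullbacks; Corollary \ref{cor:gbc} then applies to both invocations of Theorem \ref{thm:gbc}, making $U_1$, $U_2$, and hence $U$, independent of $D$. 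The subtlest point is verifying that the identification $h_{\bar X}^*\gamma_*p^*F\cong\eta_*h_X^*p^*F$ is compatible with the two generic base-change isomorphisms on $U$; this comes down to routine naturality of the smooth and proper base-change $2$-morphisms built into the six-functor formalism.
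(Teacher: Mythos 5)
Your argument is correct and follows essentially the same route as the paper: two applications of the generic base change formula over the parameter base $\PP^\wedge$ (for $\eta$ and for $\gamma$), chained through the smooth base change identification $h_{\bar X}^*\gamma_*p^*F\cong\eta_*h_X^*p^*F$ established just before the theorem, then a comparison of localisation cofiber sequences, and finally Corollary~\ref{cor:gbc} for the independence of $D$. The only presentational difference is that you first strip $\bar\imath_*$ using its full faithfulness to reduce to the base change transformation $\bar\imath^*(e_T)_*F_T\to u_*i^*F_T$, whereas the paper establishes the same thing directly with $\bar\imath_*$ applied throughout; this is the same argument.
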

\begin{proof}
We apply the generic base change formula to the restriction of $p^*(F)$ on $H_X$
and $\eta$, seen
over the scheme of hyperplane sections $\PP^\wedge$. There is thus a
dense open subscheme $U\subset\PP^\wedge$ over which $\eta_*\, h^*_X\, p^*(F)$
is constructible and its formation stable under any base change along any map
$T\to U\subset\PP^\wedge$. Applying the generic base change formula to $p^*(F)$
with respect to the map $\gamma$ over $\PP^\wedge$,
we may shrink $U$ so that the formation of $\gamma_*\, p^*(F)$ is
stable under any base change along any map $T\to U\subset\PP^\wedge$ as well.

Let us consider an $S$-scheme $T$ together with a map $t\colon T\to U$
defining an hyperplane $H_t$ in $\PP$. We obtain pullback
squares of the form
\[
\begin{tikzcd}
H \ar{r}{t_{H}}\ar{d}[swap]{i}&H_{X}\ar{d}{h_{X}}\\
X_T\ar{r}{t_{X}}&X\times_S\PP^\wedge
\end{tikzcd}
\quad\text{and}\quad
\begin{tikzcd}
\bar H \ar{r}{t_{\bar H}}\ar{d}[swap]{\bar\imath}&H_{\bar X}\ar{d}{h_{\bar X}}\\
\bar X_T\ar{r}{t_{\bar X}}&\bar X\times_S\PP^\wedge
\end{tikzcd}
\]
inducing base change isomorphisms
\[
t^*_{\bar X}\, \gamma_*\, p^*(F)\cong (e_T)_*\, \bar\imath^*\, p^*(F)= (e_T)_*(F_T)
\]
and
\[
t^*_{\bar H}\, \eta_*\, h^*_X\, p^*(F)\cong u_*\, t^*_H\, p^*(F)=u_*\, i^*(F_T)\, .
\]
We claim that these induce a canonical isomorphism:
\[
\bar\imath_*\, \bar\imath^*\, (e_T)_*(F_T)\cong (e_T)_*\, i_* \, i^*(F_T)\, .
\]
Indeed, we have:
\begin{align*}
\bar\imath_*\, \bar\imath^*\, (e_T)_*(F_T)
&\cong \bar\imath_*\, \bar\imath^*\, t^*_{\bar X}\, \gamma_*\, p^*(F)\\
&\cong \bar\imath_*\, t^*_{\bar H}\, h^*_{\bar X}\, \gamma_*\, p^*(F)\\
&\cong \bar\imath_*\, t^*_{\bar H}\, \eta_*\, h^*_X\, p^*(F)\\
&\cong \bar\imath_*\, u_*\, i^*(F_T)\\
&\cong (e_T)_*\, i_*\, i^*(F_T)\, .
\end{align*}

On the other hand, we have a canonical morphism of cofiber sequences:
\[
\begin{tikzcd}
\bar \jmath_!\, \bar \jmath^* (e_T)_*(F_T)\ar{r}\ar{d}&(e_T)_*(F_T)\ar[equals]{d}\ar{r}&
\bar \imath_*\, \bar \imath^*\, (e_T)_*(F_T)\ar{d}{\cong}\\
(e_T)_*\, j_!\, j^*(F_T)\ar{r}&(e_T)_*(F_T)\ar{r}&(e_T)_*\, i_*\, i^*(F_T)
\end{tikzcd}
\]
in which the vertical map on the right hand side is an isomorphism by the
computation we just did. Since the middle vertical map is the identity,
this proves that the vertical map on the left hand side is an isomorphism.
The fact that the open subscheme $U\subset\PP^\wedge$ may be chosen
independently of $D$
if ever the sheaf $F$ is strictly of geometric origin follows from
Corollary~\ref{cor:gbc}.
\end{proof}
\begin{cor}\label{cor:abstract weak Lefschetz}
Let $k$ be an infinite field, and $X$ a quasi-projective scheme over $k$.
We choose an open immersion $e:X\to\bar X$ into a projective $k$-scheme,
and a constructible motivic sheaf $M$ in $\DM(X)$.
There exists a dense open subset $U$ (hence with at least one rational point)
of the family of hyperplane sections of $\bar X$,
such that, for any $\bar H$ in $U$, with $H=\bar H\cap X$ and
with corresponding open complements
$\bar \jmath\colon \bar X-\bar H\to\bar X$ and $j\colon X-H\to X$,
and for any six functors formalism $D$ as in paragraph \ref{def:6FF},
if we put $F=r_D(M)$ for the realization of $M$ in $D$, then there is a canonical
isomorphism
\[
\bar \jmath_!\, \bar \jmath^*\, e_*(F)\cong e_*\, j_!\, j^*(F)\, .
\]
In particular, the cohomology with compact support of $\bar X -\bar H$
with coefficients in $e_*(F)$ coincides with the cohomology of $X$ with
coefficients in $j_!\, j^*(F)$.
\end{cor}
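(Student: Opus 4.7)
The plan is to reduce directly to Theorem \ref{thm:weak Lefschetz}, making essential use of its clause asserting that when the input sheaf is strictly of geometric origin the dense open in the projective dual may be chosen independently of the target six-functors formalism. That uniformity is precisely what allows a single hyperplane section to work for all realizations $D$ at once.

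First I would fix a closed embedding $\bar f\colon\bar X\hookrightarrow\PP=\PP^n_k$ (possible since $\bar X$ is projective), set $S=\spec k$, and let $\PP^\wedge$ denote the dual projective space parametrizing hyperplanes in $\PP$. Apply Theorem \ref{thm:weak Lefschetz} to the composite $X\overset{e}{\to}\bar X\overset{\bar f}{\to}\PP$, to the six-functors formalism $D=\DM$, and to the sheaf $F=M$, which is tautologically strictly of geometric origin (as $r_{\DM}$ is the identity on $\DM_c(X)$). This produces a dense open $U\subset\PP^\wedge$ satisfying the conclusion of the theorem and, crucially, independent of the six-functors formalism in which we later realize $M$.

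Second, since $k$ is infinite the open $U$ must contain a $k$-rational point $t\colon\spec k\to U$, corresponding to a hyperplane section $\bar H\subset\bar X$ with $H=\bar H\cap X$. Taking $T=\spec k$ specializes the diagram of cartesian squares of Theorem \ref{thm:weak Lefschetz} so that $X_T=X$, $\bar X_T=\bar X$, $e_T=e$, and $F_T=F$, with no nontrivial base change. For an arbitrary six-functors formalism $D$ as in paragraph \ref{def:6FF}, applying Theorem \ref{thm:weak Lefschetz} to the realization $F=r_D(M)$ would a priori yield some dense open $U_D\subset\PP^\wedge$, but the $D$-independence clause (which ultimately rests on Corollary \ref{cor:gbc}) lets us take $U_D=U$; the fixed $\bar H$ then lies in $U_D$, and the theorem delivers the desired isomorphism
\[
\bar\jmath_!\,\bar\jmath^*\,e_*(F)\cong e_*\,j_!\,j^*(F).
\]

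Finally, for the cohomological assertion, apply $(\pi_{\bar X})_!$ with $\pi_{\bar X}\colon\bar X\to\spec k$ to both sides: on the left this computes the cohomology with compact support of $\bar X-\bar H$ with coefficients in $\bar\jmath^*e_*(F)$, and on the right, since $\bar X$ is proper we have $(\pi_{\bar X})_!=(\pi_{\bar X})_*$, so we recover the cohomology of $X$ with coefficients in $j_!\,j^*(F)$. There is no serious obstacle in this corollary: the geometric content has been absorbed into Theorem \ref{thm:weak Lefschetz} and the $D$-independence of the locus $U$, and the only standalone ingredient is the density of $k$-rational points in any dense open of $\PP^\wedge$ when $k$ is infinite.
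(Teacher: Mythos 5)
Your proof is correct and takes exactly the route the paper intends: the corollary is stated without its own proof as a direct consequence of Theorem \ref{thm:weak Lefschetz}, and you supply precisely the needed details—the $D$-independence of the locus $U$ (via Corollary \ref{cor:gbc}, since $M$ is strictly of geometric origin), the existence of a $k$-rational point in $U$ because $k$ is infinite, the trivial specialization to $T=\spec k$, and the final cohomological step using properness of $\bar X$.
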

\begin{paragr}\label{def:good position}
A fundamental situation that will come back eventually in these notes
is the following one.
Let $F$ be in $D(X)$ for an affine scheme of finite type $X$ over a field $k$.
We assume chosen an affine open embedding $e\colon X\to \bar X$ into a projective
scheme. A hyperplane section $\bar H$ of $\bar X$ with trace $H=X\cap \bar H$
is said to be \emph{transverse to $F$} if
by forming the corresponding pullback squares below
\[
\begin{tikzcd}
X - H\ar[hook]{r}{j}\ar[hook]{d} & X\ar[hook]{d}{e}
& H\ar[hook]{d}{u}\ar[hook']{l}[swap]{i}\\
\bar X - \bar H\ar[hook]{r}{\bar \jmath}
& \bar X & \bar H \ar[hook']{l}[swap]{\bar\imath}
\end{tikzcd}
\]
there is a canonical isomorphism
\[
\bar \jmath_!\, \bar \jmath^*\, e_*(F)\cong e_*\, j_!\, j^*(F)\, .
\]
Theorem \ref{thm:weak Lefschetz}
tells that there is a dense open subspace of such hyperplane
sections and therefore that, possibly after base change along a finite
extension of $k$, one can provide such nice hyperplane sections.
Furthemore, if ever $F$ is strictly of geometric origin, one can choose
a dense open subspace of hyperplane sections in good position with respect to $F$
independently of $D$.
\end{paragr}
\section{Perverse sheaves}
\begin{paragr}\label{def:t-struct}
Let $D$ be a six-functors formalism defined on schemes of finite presentation
over a base $S$. A \emph{$t$-structure} on $D$ is a family of $t$-structures
on $D(X)$ for each $X$ with the following properties
(we adopt cohomological conventions):
\begin{itemize}
\item[(a)] for any scheme $X$ any integer $n\in\ZZ$,
the Tate object $\unit_X(n)$ is in the heart of $D(X)$;
\item[(b)] for any scheme $X$, the tensor product functor preserves objects concentrated
in degree $\leq 0$ in the sense of the $t$-structure (i.e. tensoring with an object of the heart is right $t$-exact in $D(X)$);
\item[(c)] for any map $f\colon X\to Y$, the functor $f^*\colon D(Y)\to D(X)$
is $t$-exact;
\item[(d)] for any separated morphism of finite presentation $f:X\to Y$, the
functor $f_!\colon D(X)\to D(Y)$ is right $t$-exact;
\item[(e)] the truncation functors preserve dualizable objects and all dualizable
objects of $D(X)$ are bounded for the $t$-structure;
\item[(f)] producing the dual of dualizable objects $M\mapsto M^\wedge$
is a $t$-exact functor.
\end{itemize}
As a consequence, Tate twists $M\mapsto M(n)$ are $t$-exact functors.
Any constructible object $F$ of $D(X)$ is bounded.
Indeed, if  we define $f$ as the coproduct of the embeddings of the
strata over which $F$ becomes dualizable, then $f^*(F)$ is dualizable in $D(X')$, and
since $f^*$ is conservative (by localization) and $t$-exact, this proves
the claim. Remark that condition (f) is only reasonnable with $\FF$-linear
coefficients for some field $\FF$.\footnote{Condition (f)
can be avoided but we would need two $t$-structures
whose restrictions to dualizable objects would be exchanged through ordinary
duality, and that would only make notations worse.}
We denote by $D^{\leq i}_c(X)$ (by $D^{\geq i}_c(X)$ respectively)
the full subcategory of $D_c(X)$ that consists of constructible objects
with cohomology concentrated in degrees $\leq i$ (in degrees $\geq i$, respectively)
with respect to the $t$-structure above. Such a $t$-structure will be called
\emph{ordinary}.
\end{paragr}
\begin{example}
It is well known that, to say the least,
the existence of such a $t$-structure is very conjectural as far as $\DM$ is concerned.
However, $D(X)=D(X,\QQ_\ell)$, $D(X)=D(X,\FF_\ell)$
and $D(X)=D(X,\FF_\mathcal{U})$ have canonical
$t$-structures coming from the usual $t$-structures on the derived category
of sheaves of $\Lambda$-modules on the small \'etale site of $X$, with $\Lambda$
and finite commutative ring. If moreover $X$ is noetherian, all the properties listed
above will be fulfilled in these cases.
\end{example}
\begin{paragr}\label{def:perverse t-structure}
Let $k$ be a field and $D$ a six-functors formalism as in \ref{def:6FF}
defined on schemes of finite type over $k$ and equipped with a $t$-structure
in the sense of \ref{def:t-struct}.
We can then reproduce word for word
section 2.2 of \cite{BBDG}. We will always consider here the autodual perversity.
The perverse $t$-structure on $D(X)$ is characterized by noetherian induction,
using the following facts:
\begin{itemize}
\item[(i)] The perverse $t$-structure coincides with the usual one on $D(X)$
for any $k$-scheme $X$ of dimension $\leq 0$;
\item[(ii)] For any closed immersion $i\colon Z\to X$, the functor
$i_!\colon D(Z)\to D(X)$ is $t$-exact with respect to the perverse $t$-structure,
and for any open immersion $j\colon U\to X$, the functor
$j^*\colon D(X)\to D(U)$ is $t$-exact with respect to the perverse $t$-structure;
\item[(iii)] For any regular scheme $X$ of pure dimension $d$, any
dualizable object $F$ of $D(X)$ that is also in the heart of the
usual $t$-strucure
defines an object $F[-\dim(X)]$ in the
heart of the perverse $t$-structure of $D_c(X)$.
\end{itemize}
More precisely, we define the perverse $t$-structure on $D(X)$ by induction on the
dimension of $X$ (together with properties (i), (ii) and (iii) above).
For $X$ of dimension $\leq 0$, this is the ordinary $t$-structure
(the one from \ref{def:t-struct}). For $X$ of dimension $d>0$, assuming that the
perverse $t$-structure is defined on $D(Z)$ for any scheme $Z$ of dimension $<d$,
we define the perverse $t$-structure on $D(X)$ as follows. For any nowhere dense
closed subscheme $i\colon Z\hookrightarrow X$ with open complement
$j\colon U\hookrightarrow X$ such that $U_\mathit{red}$ is regular,
we denote by $D_U(X)$ the full subcategory of $D(X)$ spanned by constructible
objects $F$ with the property that $j^*(F)$ is dualizable.
If $D^\wedge(U)$ denotes the full subcategory of dualizable objects of $D(U)$,
the functors $i_*$ and $j^*$ expressing $D(X)$ as a recollement of $D(U)$ and $D(Z)$
in the sense of \cite[1.4.3]{BBDG} exhibit $D_U(X)$ as a recollement of $D^\wedge(U)$
and $D(Z)$. If $U=\coprod_i U_i$ is the decomposition of $U$ into its
irreducible components, then
\[
D^\wedge(U)\cong\prod_i D^\wedge(U_i)
\]
and we define the perverse $t$-structure on $D^\wedge(U)$ as the product perverse
$t$-structure, where the perverse $t$-structure on each $D^\wedge (U_i)$
is defined by the property that its heart consists of those objects of the form
$F[-\dim(U_i)]$ with $F$ in the heart of the ordinary $t$-structure on $D(U_i)$.
We have a perverse $t$-structure on $D(Z)$ by induction, and thus a perverse
$t$-structure on $D_U(X)$, by \cite[Th\'eor\`eme 1.4.10]{BBDG}.
If $Z'$ is a nowhere dense open subscheme of $X$ containing $Z$ and if $U'$ is the complement of $Z'$, since the extension by zero functor $D(Z)\to D(Z')$
is $t$-exact (by induction on dimension, property (ii) holds true),
and since the restriction functor $D^\wedge(U)\to D^\wedge(U')$ is
obviously $t$-exact, we see that the inclusion functor
\[
D_U(X)\hookrightarrow D_{U'}(X)
\]
is $t$-exact for the perverse $t$-structures.
Writing
\[
\varinjlim_U D_U(X)=D_c(X)
\]
where the filtered colimit is indexed by dense open subschemes $U\subset X$
such that $U_\mathit{red}$ is regular thus defines a $t$-structure on $D_c(X)$.
It is clear that properties (i), (ii) and (iii) are true by construction.

We denote by $^pD^{\leq i}_c(X)$ (by $^pD^{\geq i}_c(X)$ respectively)
the full subcategory of $D_c(X)$ that consists of constructible objects
with cohomology concentrated in degrees $\leq i$ (in degrees $\geq i$, respectively)
with respect to the perverse $t$-structure defined above. 

An object of $D(X)$ is called \emph{perverse} if it is in the heart of the perverse
$t$-structure on $D_c(X)$.
\end{paragr}
\begin{lemma}\label{lemma:restr. right t-exact}
For any closed immersion $i\colon Z\to X$, the functor $i^*\colon D(X)\to D(Z)$
is right $t$-exact with respect to the perverse $t$-structure.
\end{lemma}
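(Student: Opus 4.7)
My plan is to deduce the right $t$-exactness of $i^*$ from the left $t$-exactness of its right adjoint $i_*$ by a routine adjunction argument. First, I would recall that for a closed immersion $i\colon Z\to X$ in any six-functors formalism one has the canonical identification $i_*\cong i_!$. Hence property~(ii) of the construction of the perverse $t$-structure in paragraph~\ref{def:perverse t-structure} asserts that $i_*$ is $t$-exact for the perverse $t$-structure, in particular left $t$-exact, i.e.\ $i_*\bigl({}^pD^{\geq 0}_c(Z)\bigr)\subseteq {}^pD^{\geq 0}_c(X)$.

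Given $F\in{}^pD^{\leq 0}_c(X)$ and any $G\in{}^pD^{\geq 1}_c(Z)$, the adjunction $i^*\dashv i_*$ yields an equivalence
\[
\map_{D(Z)}(i^*F,G)\simeq \map_{D(X)}(F,i_*G),
\]
whose right-hand side is contractible because $i_*G\in{}^pD^{\geq 1}_c(X)$ while $F\in{}^pD^{\leq 0}_c(X)$. By the orthogonality characterization of the perverse $t$-structure, this proves $i^*F\in{}^pD^{\leq 0}_c(Z)$, which is exactly the desired right $t$-exactness.

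The core of the proof is really just this one application of adjunction, so I do not anticipate any serious obstacle. The only point I would want to double check is that property~(ii) of paragraph~\ref{def:perverse t-structure} applies to all closed immersions, not only to the nowhere dense ones that arise as closed complements of the regular strata in the recollements defining the perverse $t$-structure. One handles this by noetherian induction on $\dim X$: if $Z$ contains top-dimensional components of $X$, one factors $i$ suitably through a nowhere dense closed immersion supported on the non-regular part of $X$ and appeals to the inductive hypothesis. Once this is in hand, the adjunction argument above completes the proof.
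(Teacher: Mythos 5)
Your adjunction argument is exactly the paper's (one-line) proof: $i_*$ is $t$-exact, hence left $t$-exact, for the perverse $t$-structure by property~(ii) of \ref{def:perverse t-structure}, so its left adjoint $i^*$ is right $t$-exact. The paper simply asserts that property~(ii) holds for \emph{all} closed immersions ``by construction,'' so your caveat is reasonable; however, your suggested fix of factoring $i$ through a nowhere dense closed immersion cannot work when $Z$ contains a top-dimensional component of $X$ (no nowhere dense closed subscheme contains $Z$ then) --- one instead verifies property~(ii) directly from the colimit-of-recollements description of $D_c(X)$, shrinking the regular dense open $U$ so that $U\cap Z$ is either empty or itself regular with dualizable restriction, and invoking the inductive hypothesis on dimension.
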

\begin{proof}
This is obvious since its right adjoint $i_*$ is (left) $t$-exact
with respect to the perverse $t$-structure.
\end{proof}
\begin{paragr}
Observe that, by generic base change, the functors $f_*$ preserve constructible
objects for any morphism $f$ between $k$-schemes of finite type.
This implies that, for any separated morphism of finite type $f\colon X\to Y$,
the functor $f^!$ preserves constructible objects in $D$: since this is a Zariski-local
property, it suffices to check this property for $f$ quasi-projective.
By relative purity, for $f$ smooth, we have $f^!(\unit)\otimes f^*\cong f^!$
with the object $f^!(\unit)$ $\otimes$-invertible (hence dualizable and therefore
constructible), so that it suffices to check that $i^!$ preserves
constructible objects for $i\colon Z\to X$ a closed immersion.
If $j\colon U\to X$ is the open complement, the fiber sequence
\[
i^!\to i^*\to i^*\, j_*\, j^*
\]
shows that $i^!$ preserves constructible objects.
We also see that the internal Hom preserve constructible objects:
For $F$ and $G$ constructible in $D(X)$, we choose an open
immersion $j\colon U\to X$ so that $j^*(F)$ is dualizable.
The internal Hom $\uHom(F,G)$ then fits into a fiber sequence
\[
i_*\, i^!\uHom(F,G)\to \uHom(F,G)\to j_*\, j^*\uHom(F,G)
\]
that is isomorphic to
\[
i_*\uHom(i^*(F),i^!(G))\to\uHom(F,G)\to j_*\uHom(j^*(F),j^*(G))\, .
\]
We see that $\uHom(i^*(F),i^!(G))$ by induction on the Krull dimension
whereas the object $\uHom(j^*(F),j^*(G))$ is the tensor product of the dual of the dualizable
object $j^*(F)$ with the constructible object $j^*(G)$, hence is constructible.
We can thus define Verdier duality in $D$ as the functor
\[
D_X\colon D_c(X)^{\mathit{op}}\to D_c(X)\ , \quad F\mapsto\uHom(F,a^!(\unit))
\]
where $a\colon X\to\spec k$ is the structural map.
Observe that, since
\[
f^!\uHom(A,B)\cong\uHom(f^*(A),f^!(B))
\]
and
\[
\uHom(f_!(A),B)\cong f_*\uHom(A,f^!(B))\, ,
\]
we see that Verdier duality commutes
with $f_*$ for $f$ proper.
\end{paragr}
\begin{rem}\label{rem:verdier dualizable}
On regular schemes, Verdier duality preserves dualizable objects:
For $F$ dualizable in $D(X)$ with $X_\mathit{red}$ regular,
we have $D_X(\unit_X)=\unit_X(d)[2d]$ with $d=\dim X$ (it suffices to prove this
for $D=DM$ since $r_D\colon DM\to D$ preserves the six operations) so that
\[
D_X(F)\cong F^\wedge\otimes\unit_X(d)[2d]=F^\wedge(d)[2d]
\]
with $F^\wedge$ the ordinary dual of $F$.
In particular, we have
\[
D_X(F[d]])\cong D_X(F)[-d]\cong F^\wedge[d](d)\, .
\]
\end{rem}
\begin{defn}\label{def:Artin-Verdier}
An \emph{Artin-Verdier six-functors formalism} is a six-functors formalism
$D$ defined for schemes of finite type over a field $k$ as in \ref{def:6FF}
equipped with an ordinary $t$-structure in the sense of \ref{def:t-struct}
with the following two extra properties:
\begin{itemize}
\item[(i)] Verdier duality holds in $D$: for any scheme of finite type $X$ over $k$,
Verdier duality induces an equivalence
\[
D_X\colon D_c(X)^\mathit{op}\cong D_c(X)\, .
\]
\item[(ii)] Artin vanishing holds in $D$: for any affine morphism of $k$-schemes
of finite type $f\colon X\to Y$
the induced functor $f_*\colon D_c(X)\to D_c(Y)$ is right $t$-exact with respect to
the perverse $t$-structure.
\end{itemize}
\end{defn}
\begin{example}
$D(-,\FF)$ with $\FF$ a finite field,
$D(-,\QQ_\ell)$ with $\ell$ any prime number,
and $D(-,\FF_\mathcal{U})$ with $\mathcal{U}$ any ultrafilter on the set of prime numbers, as defined in \ref{ex:ladic} and \ref{ex:ultra} are Artin-Verdier six-functors
formalisms -- properties (i) and (ii) are even known to hold over any quasi-excellent base, after the work of
Gabber \cite[Expos\'e XVII, Th\'eor\`eme 0.2 and Expos\'e XV, Th\'eor\`eme 1.1.2]{gabber3}.
Other examples are the de Rham six-functors formalism, the $\FF$-linear
version of Betti six-functors formalism (for any field of coefficient $\FF$),
and the rigid six-functors formalism.
\end{example}
\begin{paragr}
From now on, we fix an Artin-Verdier six-functors formalism $D$.
We consider the associated perverse $t$-structure on $D$ (see \ref{def:perverse t-structure}).
\end{paragr}
\begin{rem}\label{rem:Verdier restriction}
We have formulas of the form
\[
f^!\, D_X\cong D_Y\, f^*\quad\text{and}\quad D_Y\, f_!\cong f_*\, D_X
\]
for any morphism of $k$-schemes of finite type $f\colon X\to Y$.
Since Verdier duality is an equivalence, this implies dually that
\[
f^*\, D_X\cong D_Y\, f^!\quad\text{and}\quad D_Y\, f_*\cong f_!\, D_X
\]
holds as well.
Given an immersion $\imath\colon S\to X$ with
$S_\mathit{red}$ regular, and $F$ constructible in $D(X)$ such that
$\imath^*(F)$ (with $\imath^!(F)$, respectively)
is dualizable, we observe that
\[
D_S\,\imath^*(F)\cong\imath^!\, D_S(F)
\quad\text{($D_S\,\imath^!(F)\cong\imath^*\, D_S(F)$, respectively)}
\]
is dualizable as well: it is the tensor product of the $\otimes$-invertible
object $D_S(\unit_S)\cong\unit_S(\dim S)[2\dim S]$ with the
ordinary dual of $i^*(F)$ (of $i^!(F)$, respectively);
see \ref{rem:verdier dualizable}.
\end{rem}
\begin{prop}\label{prop:Verdier perverse exact}
Verdier duality is $t$-exact with respect to the perverse $t$-structure.
\end{prop}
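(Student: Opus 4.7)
The proof proceeds by noetherian induction on $d = \dim X$, mirroring the inductive definition of the perverse $t$-structure in \ref{def:perverse t-structure}. Since $D_X \circ D_X \cong \mathrm{id}$ is an equivalence, $t$-exactness reduces to the single inclusion $D_X\bigl({}^pD_c^{\leq 0}(X)\bigr) \subseteq {}^pD_c^{\geq 0}(X)$. The base case $d = 0$ is immediate: the perverse $t$-structure coincides with the ordinary one, every constructible object is dualizable, and $D_X$ is ordinary dualization tensored with the invertible dualizing object $\omega_X$ lying in the ordinary heart; property~(f) of \ref{def:t-struct}, interpreted contravariantly as swapping the two halves of the ordinary $t$-structure on dualizable objects, yields the claim.

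For the inductive step, fix $\dim X = d > 0$ and assume the result for schemes of smaller dimension. Since $D_c(X) = \varinjlim_U D_U(X)$ runs over dense opens $j\colon U \hookrightarrow X$ with $U_\mathrm{red}$ regular and closed complement $i\colon Z \hookrightarrow X$, and the transition inclusions $D_U(X) \hookrightarrow D_{U'}(X)$ are perverse $t$-exact by construction, it suffices to prove the inclusion on each $D_U(X)$. A preliminary check using Remarks~\ref{rem:Verdier restriction} and~\ref{rem:verdier dualizable} (together with $j^! = j^*$) shows $D_X$ stabilizes $D_U(X)$: indeed $j^* D_X(F) \cong D_U(j^*F) \cong (j^*F)^\wedge(\dim U)[2\dim U]$ is dualizable whenever $j^*F$ is.

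By \cite[Théorème~1.4.10]{BBDG} the perverse $t$-structure on $D_U(X)$ is glued from those on $D^\wedge(U)$ and $D(Z)$: $F$ is perverse $\leq 0$ iff $j^*F$ and $i^*F$ are perverse $\leq 0$ on their respective pieces, and $F$ is perverse $\geq 0$ iff $j^*F$ and $i^!F$ are perverse $\geq 0$. Using Remark~\ref{rem:Verdier restriction} again,
\[
j^*\, D_X(F) \cong D_U(j^*F) \quad \text{and} \quad i^!\, D_X(F) \cong D_Z(i^*F).
\]
For $F$ perverse $\leq 0$, the closed-stratum condition on $D_X(F)$ follows from the inductive hypothesis applied to $i^*F \in {}^pD^{\leq 0}(D(Z))$, and the open-stratum condition reduces to the assertion that $D_U \cong (-)^\wedge(\dim U)[2\dim U]$ exchanges the two halves of the perverse $t$-structure on $D^\wedge(U)$. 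Since this perverse $t$-structure is by construction a cohomological shift of the ordinary one, the latter is a short numerical check combining the $t$-exactness of Tate twists (property~(a)), the swap of the ordinary halves on dualizable objects by $(-)^\wedge$ (property~(f)), and the compensating shift $[2\dim U]$. Taking the filtered colimit over $U$ concludes the induction.

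The main technical obstacle is the bookkeeping across the recollement: one must verify that the glued ${}^pD^{\leq 0}$ is tested by $i^*$ while ${}^pD^{\geq 0}$ is tested by $i^!$, so that the identities $i^* D_X \cong D_Z i^!$ and $i^! D_X \cong D_Z i^*$ convert the $\leq$-condition for $F$ into the $\geq$-condition for $D_X(F)$ on the closed stratum. The parallel calculation on $D^\wedge(U)$ requires a careful balance among the Tate twist, the shift $[2\dim U]$, the perverse shift defining the heart, and the swap produced by $(-)^\wedge$; together they encode the self-duality of the autodual (middle) perversity.
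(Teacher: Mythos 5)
Your proof is correct and follows essentially the same route as the paper's: noetherian induction on dimension, reduction to the subcategory $D_U(X)$, gluing the perverse $t$-structure from $D^\wedge(U)$ and $D(Z)$, and exchanging $i^*$ with $i^!$ via the Verdier-duality identities from Remark~\ref{rem:Verdier restriction}. The only difference is that you spell out a couple of steps the paper treats as implicit (the reduction to a single inclusion via $D_X \circ D_X \cong \mathrm{id}$, and the verification that $D_X$ stabilizes $D_U(X)$), which is a harmless and helpful addition.
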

\begin{proof}
We proceed by induction on the Krull dimension $d$ of $X$. For $d\leq 0$,
all constructible objects are dualizable and Verdier duality coincides with ordinary
duality, and the bidual of a dualizable object $F$ is canonically isomorphic to $F$.
Let us assume that $d>0$.
We choose a nowhere dense closed subscheme $i\colon Z\to X$ with open complement
$j\colon U\to X$ with $U_\mathit{red}$ regular.
We consider the category $D_U(X)$ of constructible objects $F$ in $D(X)$
such that $j^*(F)$ is dualizable.
We observe that Verdier duality induces a
$t$-exact functor with respect to the perverse $t$-structure
\[
D_c(Z)^\mathit{op}\to D_c(Z)
\]
by induction on the Krull dimension, and an equivalence
\[
D_U(U)^\mathit{op}\cong D_U(U)
\]
because, up to a Tate twist,
Verdier duality on $D_U(U)$ simply is ordinary duality
restricted to dualizable objects; see Remark \ref{rem:verdier dualizable}.
In fact, the perverse $t$-structure on $D_U(U)$
is the ordinary $t$-structure shifted by $\dim U$:
the $t$-structure whose heart is
\[
D^{\leq \dim U}_U(U)\cap D^{\geq \dim U}_U(U)\, .
\]
Verdier duality induces an equivalence
$D_U(U)^\mathit{op}\cong D_U(U)$
that is $t$-exact with respect to the perverse $t$-structure
because of formula $D_X(F[d]])\cong F^\wedge[d](d)$
for any $F$ in the ordinary heart of $D_U(U)$.
Verdier duality is $t$-exact with respect to the perverse $t$-structure
on $D(Z)$ by induction on $d$.
Verdier duality restricts to $D_U(X)$, and since the
inclusion $D_U(X)\hookrightarrow D(X)$ is $t$-exact with respect to the perverse $t$-structure by construction, it suffices to prove that Verdier
duality is $t$-exact on $D_U(X)$. In other words,
we want to prove that an object $F$ of $D_U(X)$ is in $^pD^{\leq 0}_U(X)$
if and only if is Verdier dual $D_X(F)$ is in $^pD^{\geq 0}_U(X)$.
By definition of glued $t$-structures, an object
$F$ is in $^pD^{\leq 0}_U(X)$ if only if $i^*(F)$ lies in
$^pD^{\leq 0}(Z)$ and $j^*(F)$ lies in $^pD^{\leq 0}_U(U)$.
Dually, an object $F$ is in $^pD^{\leq 0}_U(X)$ if only if $i^!(F)$ lies in
$^pD^{\geq 0}(Z)$ and $j^*(F)$ lies in $^pD^{\leq 0}_U(U)$.
Since Verdier duality exchanges $i^*$ and $i^!$ and
commutes with $j^*$, this achieves the proof.
\end{proof}
\begin{cor}\label{cor:compact supprt perverse left exact}
For any affine morphism of $k$-schemes
of finite type $f\colon X\to Y$
the induced functor $f_!\colon D_c(X)\to D_c(Y)$ is left $t$-exact with respect to
the perverse $t$-structure.
\end{cor}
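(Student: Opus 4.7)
The plan is to derive the statement by dualizing Artin vanishing through Verdier duality. The three ingredients are already in place: Artin vanishing for $f_\ast$ on an affine morphism (\ref{def:Artin-Verdier}(ii)), $t$-exactness of Verdier duality for the perverse $t$-structure (Proposition \ref{prop:Verdier perverse exact}), and the exchange formula $D_Y\, f_!\cong f_\ast\, D_X$ recalled in Remark \ref{rem:Verdier restriction}.

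Concretely, I would proceed as follows. Fix $F$ in ${}^pD^{\geq 0}_c(X)$; the goal is to show $f_!(F)\in {}^pD^{\geq 0}_c(Y)$. Since $D_X\colon D_c(X)^{\mathit{op}}\to D_c(X)$ is an equivalence exchanging ${}^pD^{\leq 0}_c$ and ${}^pD^{\geq 0}_c$ by Proposition \ref{prop:Verdier perverse exact}, the object $D_X(F)$ lies in ${}^pD^{\leq 0}_c(X)$. Artin vanishing applied to the affine morphism $f$ then places $f_\ast\, D_X(F)$ in ${}^pD^{\leq 0}_c(Y)$. Invoking the exchange isomorphism $D_Y\, f_!(F)\cong f_\ast\, D_X(F)$ transfers this to the statement that $D_Y\, f_!(F)\in {}^pD^{\leq 0}_c(Y)$. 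Applying $D_Y$ once more and using that Verdier duality on $Y$ is an involutive $t$-exact equivalence exchanging the two halves of the $t$-structure, we conclude $f_!(F)\cong D_Y D_Y f_!(F)\in {}^pD^{\geq 0}_c(Y)$, which is precisely left $t$-exactness of $f_!$.

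There is no real obstacle: everything is formal once Proposition \ref{prop:Verdier perverse exact} is available, since Artin vanishing and its compact-support counterpart are swapped under Verdier duality in exactly the same way that $f_\ast$ and $f_!$ are. The only mild point of care is to note that $f_!$ preserves constructibility (so that the statement makes sense at the level of $D_c$), but this follows from condition (i) of paragraph \ref{def:6FF} since $f$ is separated of finite type, as is every morphism between $k$-schemes of finite type.
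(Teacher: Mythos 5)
Your proof is correct and is precisely the argument the paper has in mind: the corollary is stated without an explicit proof because it follows formally from Proposition \ref{prop:Verdier perverse exact}, Artin vanishing, and the exchange formula $D_Y\, f_!\cong f_*\, D_X$, exactly as you lay out.
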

\begin{cor}
For any finite morphism $f\colon X\to Y$, the functor
$f_!=f_*\colon D(X)\to D(Y)$ is $t$-exact with respect to
the perverse $t$-structure.
\end{cor}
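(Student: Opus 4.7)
The plan is to observe that a finite morphism $f\colon X\to Y$ is both proper and affine, and then to combine the two preceding statements with the standard identification $f_!\cong f_*$ for proper $f$.

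First I would record that, as part of any six-functors formalism, proper base change gives a canonical isomorphism $f_!\cong f_*$ whenever $f$ is proper; since finite morphisms are proper, this applies to our $f$. So the single functor $f_!=f_*$ is the one whose perverse $t$-exactness we must establish.

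Next I would invoke affineness from two different directions. On one hand, by Artin vanishing, which is part of the definition of an Artin-Verdier six-functors formalism (Definition \ref{def:Artin-Verdier}(ii)), the functor $f_*\colon D_c(X)\to D_c(Y)$ is right $t$-exact with respect to the perverse $t$-structure, because $f$ is affine. On the other hand, by Corollary~\ref{cor:compact supprt perverse left exact}, the functor $f_!\colon D_c(X)\to D_c(Y)$ is left $t$-exact, again because $f$ is affine.

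Combining these with $f_!\cong f_*$ shows that this common functor is both left and right $t$-exact for the perverse $t$-structure, hence $t$-exact, which is the desired conclusion. There is no real obstacle here: the content of the corollary has already been packaged into Proposition~\ref{prop:Verdier perverse exact} (from which Corollary~\ref{cor:compact supprt perverse left exact} is deduced by applying Verdier duality to Artin vanishing) and the axiomatic properness isomorphism $f_!\cong f_*$; the present statement is simply the observation that for finite $f$ these two halves collapse onto the same functor.
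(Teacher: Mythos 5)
Your argument is correct and coincides with the paper's implicit proof (the paper states the corollary without a proof block, leaving it as an immediate consequence of the preceding material). Since $f$ finite is both proper and affine, $f_!\cong f_*$; Artin vanishing (Definition~\ref{def:Artin-Verdier}(ii)) gives right $t$-exactness of $f_*$, and Corollary~\ref{cor:compact supprt perverse left exact} gives left $t$-exactness of $f_!$, which together yield $t$-exactness.
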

\begin{lemma}\label{lemma:dualizable hearts}
Let $X$ be of pure dimension $d$,
with $X_\mathit{red}$ regular and $F$ dualizable in $D(X)$.
Then $F$ is perverse (is in $^pD^{\leq 0}(X)$)
if and only if $F[-d]$ is in the heart
of the ordinary $t$-structure (is in $D^{\leq 0}(X)$, respectively).
\end{lemma}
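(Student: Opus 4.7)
The plan is to identify the perverse $t$-structure on $D(X)$, restricted to the full subcategory $D^\wedge(X)$ of dualizable objects (in which $F$ lies by hypothesis), with an explicit shift of the ordinary $t$-structure, and then read off both equivalences.

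Since $X_{\mathit{red}}$ is regular, the inductive construction of paragraph \ref{def:perverse t-structure} can be specialised to $U=X$ and $Z=\varnothing$ in the recollement step: this gives $D_U(X) = D^\wedge(X)$, and the recollement with $D(Z)=0$ is trivial, so the perverse $t$-structure on $D^\wedge(X)$ is precisely the one produced by the construction at this stage. Writing $X = \coprod_i X_i$ for the decomposition into irreducible components---each of dimension $d$, since $X$ is of pure dimension $d$---the perverse $t$-structure on $D^\wedge(X) \cong \prod_i D^\wedge(X_i)$ is by definition the product of the perverse $t$-structures on the factors. On each factor $D^\wedge(X_i)$, the perverse $t$-structure is prescribed to have heart consisting of the shifts $G[-d]$ of objects $G$ in the ordinary heart, which is precisely the heart of the $[-d]$-shifted ordinary $t$-structure on $D^\wedge(X_i)$.

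Consequently, the perverse $t$-structure on $D^\wedge(X_i)$ is nothing but the shift by $[-d]$ of the ordinary one: an object $G \in D^\wedge(X_i)$ lies in ${}^pD^{\leq 0}(X_i)$ iff $G[-d] \in D^{\leq 0}(X_i)$, and $G$ is perverse iff $G[-d]$ lies in the ordinary heart. Applying this to the restrictions $F_i = F_{|X_i}$ and passing through the product decomposition gives the statement: $F \in {}^pD^{\leq 0}(X)$ iff $F[-d] \in D^{\leq 0}(X)$, and $F$ is perverse iff $F[-d]$ belongs to the ordinary heart.

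I expect the only delicate point to be identifying the shift of the ordinary $t$-structure as the $t$-structure produced by the construction; this uses the standard fact that a bounded $t$-structure on a stable $\infty$-category is determined by its heart, which is applicable here because dualizable constructible objects are automatically bounded for the ordinary $t$-structure by item (e) of paragraph \ref{def:t-struct}. One also implicitly uses that the perverse $t$-structure on $D_c(X)$ restricts to the described $t$-structure on $D^\wedge(X) \hookrightarrow D_c(X)$, which is automatic from the colimit description $D_c(X) = \varinjlim_U D_U(X)$ together with the $t$-exactness of the transition inclusions noted in the construction.
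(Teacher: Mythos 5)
Your route is genuinely different from the paper's, and arguably cleaner. The paper proves the $\leq$ half by a shift/minimality argument: it takes the minimal $n$ with $F\in D^{\leq n}_c(X)$, uses (implicitly, from the construction) that a dualizable $G\in D^{\leq 0}_c(X)$ has $G[d]\in{^pD^{\leq 0}}(X)$ to place $F[d-n]$ in ${^pD^{\leq 0}}(X)$, and deduces $n\leq d$; it then settles the heart case by combining this with Verdier duality and Proposition \ref{prop:Verdier perverse exact}, and leaves the converse implications essentially implicit. You instead specialise the inductive construction of \ref{def:perverse t-structure} to $U=X$, $Z=\varnothing$, which is legitimate since $X_{\mathit{red}}$ is regular and $\varnothing$ is nowhere dense: this gives $D_U(X)=D^\wedge(X)$, a trivial recollement, and hence identifies the perverse $t$-structure on $D^\wedge(X)$ as the prescribed shift of the ordinary one, componentwise on the (disjoint) irreducible components. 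You also correctly isolate the two supporting points: dualizable objects are bounded for the ordinary $t$-structure (item (e) of \ref{def:t-struct}), so the $t$-structure on $D^\wedge(X_i)$ is determined by its heart; and the colimit inclusion $D^\wedge(X)=D_X(X)\hookrightarrow D_c(X)$ is $t$-exact, so the perverse $t$-structure on $D_c(X)$ actually restricts to the one just identified. This makes the lemma definitional and delivers both directions of the equivalence simultaneously, which the paper's proof does not quite do.

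One caveat about signs. As printed, \ref{def:perverse t-structure} declares the perverse heart on $D^\wedge(U_i)$ to be $\{G[-\dim U_i] : G \text{ in the ordinary heart}\}$. Taken literally, the associated aisle is $\{F : F[\dim U_i]\in D^{\leq 0}\}$, i.e. one would get ``$F$ perverse iff $F[+d]$ is in the ordinary heart''---the opposite of the lemma. In fact this is a sign slip in the paper's own construction: the lemma as stated, the proof of Theorem \ref{thm:Beilinson}, Proposition \ref{prop:basic vanishing}, and Proposition \ref{prop:charact. perverse t-struct} all use the opposite (standard, self-dual) convention, so the formula in \ref{def:perverse t-structure} should read $G[\dim U_i]$. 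Your write-up passes from ``heart consists of $G[-d]$'' to ``$G\in{^pD^{\leq 0}}$ iff $G[-d]\in D^{\leq 0}$''---this step silently compensates for the typo and lands on the (intended) statement of the lemma, but it does not follow from the heart being $\{G[-d]\}$; it is worth flagging the discrepancy explicitly rather than absorbing it.
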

\begin{proof}
We will prove the respective case first.
Let us assume that $F$ is in $^pD^{\leq 0}(X)$
but not in $^pD^{< 0}(X)$. There is smallest
integer $n$ such that $F$ lies in $D^{\leq n}_c(X)$.
Then $F[-n]$ is in $D^{\leq 0}_c(X)$ and thus $F[d-n]$
is in $^pD^{\leq 0}(X)$, which yields $n\leq d$.

Let us assume now that $F$ is perverse.
Since $F$ is dualizable and $X_\mathit{red}$ regular of dimension $d$,
we have $D_X(F)\cong F^\wedge(d)[2d]$ (Remark \ref{rem:verdier dualizable}).
By virtue of
Proposition \ref{prop:Verdier perverse exact},
this means that
\[
F^\wedge[2d]=(F[-d])^\wedge[d]
\]
is dualizable and perverse as well.
This means that both $F[-d]$ and $(F[-d])^\wedge$
are dualizable and in $D^{\leq 0}(X)$. Since ordinary duality
of dualizable objects is $t$-exact with respect to the
ordinary $t$-structure, this implies that $F[-d]$
lies in the heart of the ordinary $t$-structure.
\end{proof}
\begin{prop}\label{prop:charact. perverse t-struct}
Let $F$ be a constructible object of $D(X)$ and $n\in\ZZ$ an integer.
The following conditions are equivalent.
\begin{itemize}
\item[(i)] $F$ belongs to $^pD^{\leq 0}_c(X)$ (to $^pD^{\geq 0}_c(X)$, respectively);
\item[(ii)] For any irreducible closed subscheme $T\subset X$, there is
a dense open subscheme $S$ in $T$ inducing an embedding $\imath_S\colon S\to X$
such that $\imath^*_S(F)$ belongs to $D^{\leq \dim S}_c(S)$
(such that $\imath^!_S(F)$ belongs to $D^{\geq \dim S}_c(S)$, respectively).
\item[(iii)] For any finite stratification $\mathcal{S}$ of $X$
by strata that are irreducible with regular reduction,
and for any embedding $\imath_S\colon S\to X$ with $S\in\mathcal{S}$,
assuming that both $i^*_S(F)$ and $i^!_S(F)$ are dualizable,
we have $\imath^*_S(F)$ in $D^{\leq \dim S}_c(S)$
(we have $\imath^!_S(F)$ in $D^{\geq \dim S}_c(S)$, respectively).
\item[(iv)] There exists a finite stratification $\mathcal{S}$ of $X$
by strata that are irreducible with regular reduction,
such that, for any embedding $\imath_S\colon S\to X$ with $S\in\mathcal{S}$,
both $i^*_S(F)$ and $i^!_S(F)$ are dualizable and
we have $\imath^*_S(F)$ in $D^{\leq \dim S}_c(S)$
(we have $\imath^!_S(F)$ in $D^{\geq \dim S}_c(S)$, respectively).
\end{itemize}
\end{prop}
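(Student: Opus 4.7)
The plan is to first reduce the $^pD^{\geq 0}$ assertions to the $^pD^{\leq 0}$ ones via Verdier duality. By Proposition~\ref{prop:Verdier perverse exact}, $F\in{}^pD^{\geq 0}_c(X)$ iff $D_X(F)\in{}^pD^{\leq 0}_c(X)$; on a regular stratum $S$ of dimension $d$, the identity $D_S\,\imath_S^!\cong\imath_S^*\,D_X$ (Remark~\ref{rem:Verdier restriction}) combined with the formula $D_S(G)\cong G^\wedge(d)[2d]$ on dualizable $G$ (Remark~\ref{rem:verdier dualizable}) shows that $\imath_S^!(F)\in D^{\geq d}_c(S)$ iff $\imath_S^*(D_X F)\in D^{\leq d}_c(S)$. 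Henceforth I treat only the $^pD^{\leq 0}$ case.

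Stratifications as in (iii) and (iv) exist: property (ii) of paragraph~\ref{def:6FF} applied to $F$ and $D_X(F)$ yields a dense open $U\subset X$ with $U_\mathit{red}$ regular on which both $\imath_U^*(F)$ and $\imath_U^*(D_X F)$ are dualizable, and then $\imath_U^!(F)\cong D_U\imath_U^*(D_X F)$ is dualizable too. Noetherian induction on the complement produces a full stratification, which settles (iii)~$\Rightarrow$~(iv).

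The heart of the matter is (iv)~$\Leftrightarrow$~(i), via noetherian induction on $\dim X$ using the recollement construction of the perverse $t$-structure from paragraph~\ref{def:perverse t-structure}. For (iv)~$\Rightarrow$~(i), given a stratification $\mathcal{S}$, shrink a dense dualizability locus to a dense open $U\subset X$ with $U_\mathit{red}$ regular, $j^*(F)$ dualizable, and $U$ contained in the union of top-dimensional strata of $\mathcal{S}$ (the combinatorial point: remove the traces of all lower-dimensional strata). Each connected component of $U$ sits in a single top stratum $S$, so $j^*(F)$ restricted there is dualizable and, by hypothesis, in $D^{\leq\dim S}$, which by Lemma~\ref{lemma:dualizable hearts} places $j^*(F)$ in $^pD^{\leq 0}_U(U)$. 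The induced stratification on $Z=X\setminus U$ verifies (iv) for $\imath^*(F)$, so induction gives $\imath^*(F)\in{}^pD^{\leq 0}(Z)$, and recollement concludes. Conversely, for (i)~$\Rightarrow$~(iii), factor $\imath_S\colon S\to X$ as an open immersion into $\overline S$ followed by a closed immersion: Lemma~\ref{lemma:restr. right t-exact} together with the perverse $t$-exactness of open pullback puts $\imath_S^*(F)$ in $^pD^{\leq 0}(S)$, and Lemma~\ref{lemma:dualizable hearts} then yields $\imath_S^*(F)\in D^{\leq\dim S}(S)$.

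The equivalence with (ii) rests on the fact that in a stratification by irreducible locally closed subschemes, the unique stratum $S$ containing the generic point of a closed irreducible $T\subset X$ is itself contained in $T$ and is open dense in it, so $\dim S=\dim T$. Thus (iv)~$\Rightarrow$~(ii) is immediate, while (ii)~$\Rightarrow$~(iv) follows by iterating (ii) on the irreducible components of $X$ and of the successive complements, refining each dense open via property (ii) of \ref{def:6FF} (for both $F$ and $D_X F$) to secure regular reduction and dualizability of both $\imath_S^*(F)$ and $\imath_S^!(F)$. The main obstacle is precisely the bookkeeping in (iv)~$\Rightarrow$~(i): ensuring that the chosen $U$ lies genuinely inside the union of top-dimensional strata while remaining open in $X$; once that is arranged, the rest is a formal assembly of Lemmas~\ref{lemma:restr. right t-exact} and~\ref{lemma:dualizable hearts} via the recollement construction of the perverse $t$-structure.
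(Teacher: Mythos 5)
The overall architecture of your proof matches the paper's: Verdier duality to reduce the $\geq$ case to the $\leq$ case, then Lemmata~\ref{lemma:restr.\ right t-exact} and \ref{lemma:dualizable hearts} plus the recollement construction to handle the $\leq$ case. The duality computation you carry out (identifying $\imath^!_S(F)\in D^{\geq\dim S}$ with $\imath^*_S(D_X F)\in D^{\leq\dim S}$ using $D_S\imath_S^!\cong\imath_S^*D_X$ and $D_S(G)\cong G^\wedge(\dim S)[2\dim S]$) is correct and is exactly the content of the last part of Remark~\ref{rem:Verdier restriction} that the paper invokes. The steps (iii)~$\Rightarrow$~(iv), (iv)~$\Rightarrow$~(i) via recollement, and (i)~$\Rightarrow$~(iii) are sound, though the noetherian induction for (iv)~$\Rightarrow$~(i) is cleaner if one takes $U$ to be exactly the union of the open (equivalently, maximal-dimensional) strata of $\mathcal{S}$ rather than shrinking a general dualizability locus — that way $Z$ is literally the union of the remaining strata and the induced stratification on $Z$ is immediate. (A minor point: the trace of $\mathcal{S}$ on $Z$ only transparently inherits dualizability of $\imath^*_{S\to Z}\imath^*_Z F=\imath^*_SF$, not of $\imath^!_{S\to Z}\imath^*_Z F$, which differs from $\imath^!_SF$; fortunately the $\leq$ induction never uses the $\imath^!$-dualizability.)

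There is however a genuine gap in your treatment of condition~(ii). You assert that ``the unique stratum $S$ containing the generic point of a closed irreducible $T\subset X$ is itself contained in $T$ and is open dense in it.'' This is false: taking $X=\mathbf{A}^1$ with the trivial stratification $\mathcal{S}=\{X\}$ and $T$ a closed point, the stratum containing the generic point of $T$ is $X$ itself, which is not contained in $T$. In general, if $\eta_T\in S$ then $T\subset\overline S$ (hence $\dim T\leq\dim S$), but $S\not\subset T$; the best one can say is that $S\cap T$ is dense open in $T$. Restricting $\imath^*_S(F)\in D^{\leq\dim S}$ to $S\cap T$ yields an object in $D^{\leq\dim S}$, which is a weaker bound than the required $D^{\leq\dim T}$ when $\dim T<\dim S$. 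So (iv)~$\Rightarrow$~(ii) is not a combinatorial triviality. The correct route is through (i): given $F\in{}^pD^{\leq 0}_c(X)$ and $T$ irreducible closed, the closed pullback $\imath^*_T(F)$ lies in ${}^pD^{\leq 0}_c(T)$ by Lemma~\ref{lemma:restr.\ right t-exact}; choose a dense open $S\subset T$ with $S_\mathit{red}$ regular and $\imath^*_S(F)$ dualizable, then $\imath^*_S(F)\in{}^pD^{\leq 0}_c(S)$ by $t$-exactness of open pullback, and Lemma~\ref{lemma:dualizable hearts} gives $\imath^*_S(F)\in D^{\leq\dim S}_c(S)$ with $\dim S=\dim T$. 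That is, (ii) should be derived from (i) in exactly the same way you derive (iii), not directly from (iv). With that repair, the cycle (i)~$\Rightarrow$~(iii)~$\Rightarrow$~(iv)~$\Rightarrow$~(i) together with (i)~$\Rightarrow$~(ii)~$\Rightarrow$~(iv) closes correctly.
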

\begin{proof}
Let $Y$ be a scheme of finite type over $k$.
For any constructible object $K$ in $D(Y)$, there is a dense open subscheme
$U$ of $Y$ such that the restriction of $K$ on $U$ is dualizable in $D(U)$.
Since any scheme of finite over scheme is excellent, there is a dense
subscheme $U$ of $Y$ that is a finite disjoint uinion of irreducible schemes
whose reductions are regular. Therefore, there exists
at least one finite stratification
$\mathcal{S}$ of $X$
by strata that are irreducible with regular reduction,
such that, for any embedding $\imath_S\colon S\to X$ with $S\in\mathcal{S}$,
both $i^*_S(F)$ and $i^!(S)$ are dualizable.
The fact that we have equivalences (i)--(iv) in the case of
$^pD^{\leq 0}_c(X)$ follows right away from
Lemmata \ref{lemma:restr. right t-exact} and \ref{lemma:dualizable hearts}.
Since Verdier duality exchanges $^pD^{\leq 0}_c(X)$ and $^pD^{\geq 0}_c(X)$
(because it is $t$-exact), the respective case follows now from
the last part of Remark \ref{rem:Verdier restriction}.
\end{proof}
\begin{cor}
We have the following exactness properties with respect to the perverse $t$-structure.
\begin{itemize}
\item[(a)] For any quasi-finite morphism $f\colon X\to Y$, the
functor $f^*\colon D_c(Y)\to D_c(X)$ is right $t$-exact and the
exceptional pullback functor $f^!\colon D_c(Y)\to D_c(X)$ is left $t$-exact.
\item[(b)] For any quasi-finite morphism $f\colon X\to Y$, the
functor $f_!\colon D_c(Y)\to D_c(X)$ is right $t$-exact and
$f_*\colon D_c(Y)\to D_c(X)$ is left $t$-exact.
\end{itemize}
\end{cor}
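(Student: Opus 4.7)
The plan is to factor any quasi-finite (separated of finite type) morphism $f\colon X\to Y$ via Zariski's main theorem as $f=i\circ j$, with $j$ an open immersion and $i$ a finite morphism, and then combine the exactness properties of the two factors via the six-functor adjunctions.

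The two building blocks are as follows. For the open immersion $j$, property (ii) in the recursive construction of the perverse $t$-structure in Paragraph \ref{def:perverse t-structure} gives that $j^*=j^!$ is $t$-exact for the perverse $t$-structure; applying the adjunctions $j_!\dashv j^*\dashv j_*$, this forces $j_!$ to be right $t$-exact and $j_*$ to be left $t$-exact. For the finite morphism $i$, the preceding corollary gives that $i_!=i_*$ is $t$-exact; using the adjunctions $i^*\dashv i_*$ and $i_!\dashv i^!$, this forces $i^*$ to be right $t$-exact and $i^!$ to be left $t$-exact.

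Part (a) then follows by composition: $f^*=j^*\, i^*$ is right $t$-exact as the composite of a $t$-exact functor and a right $t$-exact functor, and $f^!=j^!\, i^!=j^*\, i^!$ is left $t$-exact for the dual reason. Part (b) is entirely analogous: $f_!=i_!\, j_!=i_*\, j_!$ is right $t$-exact, and $f_*=i_*\, j_*$ is left $t$-exact.

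The only minor point that needs to be checked is that the adjunctions invoked do restrict to the constructible subcategories $D_c$. This is guaranteed by the preservation of constructibility under all six operations in our setting; the only non-trivial case, $f_*$, was settled earlier via Deligne's generic base change formula (see the discussion before Remark \ref{rem:verdier dualizable}). The formal principle used throughout is the standard one: with cohomological conventions, in any adjunction $L\dashv R$, the left adjoint $L$ is right $t$-exact if and only if the right adjoint $R$ is left $t$-exact.
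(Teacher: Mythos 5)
Your proof is correct, and it takes a genuinely different route from the paper's. The paper argues directly via criterion~(ii) of Proposition~\ref{prop:charact. perverse t-struct}: given a closed irreducible $T\subset X$, one shrinks the closure of its image in $Y$ to a dense open $U$ on which the restriction of $F$ lives in $D^{\leq\dim U}_c$, pulls back to $S=T\times_Y U$, and uses that a quasi-finite morphism with $S$ dominating $U$ gives $\dim S=\dim U$; the exceptional-pullback and direct-image cases then follow by Verdier duality and adjunction. You instead factor $f=i\circ j$ by Zariski's main theorem (available here since the morphisms are separated of finite type between noetherian schemes), take the $t$-exactness of $j^*=j^!$ from property~(ii) of~\ref{def:perverse t-structure} and the $t$-exactness of $i_!=i_*$ from the finite-morphism corollary, and propagate the remaining exactness statements purely formally through the adjunctions $j_!\dashv j^*\dashv j_*$ and $i^*\dashv i_*$, $i_!\dashv i^!$. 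Both arguments are sound and rest on the same foundational inputs (ultimately Artin vanishing and Verdier duality for the finite-morphism corollary). The paper's argument is more geometric and self-contained, needing only the stratified characterization of perversity and an elementary dimension count; yours is more structural, trading the dimension argument for an appeal to ZMT and to the already-established $t$-exactness for finite morphisms. One could nitpick that you should record that the factorization through $\bar X$ keeps you in the category of $k$-schemes of finite type, and that the adjunction argument on $D_c$ uses that the perverse $t$-structure is bounded on constructible objects so that the orthogonality characterizations of $D_c^{\leq 0}$ and $D_c^{\geq 0}$ apply, but these are routine points.
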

\begin{proof}
To prove (a), we use criterium (ii)
of Proposition \ref{prop:charact. perverse t-struct}.
Let $T$ be a closed subscheme of $X$.
There is a dense open subscheme $U$ of the closure of
the image of $T$ in $Y$ such that
$\imath_U^*(F)$ belongs to $D^{\leq\dim U}_c(U)$.
Therefore, if $S$ denotes the pullback of $U$ in $T$,
we have $\imath_U^*\, f^*(F)$ in $D^{\leq\dim U}_c(S)$.
Since $\dim S=\dim U$ (because $f$ is quasi-finite
and $S$ dominates $U$ by construction),
this proves that $f^*(F)$ is in $^pD^{\leq 0}(X)$.
By Verdier duality, this proves that $f^!$ is left $t$-exact.
Property (b) follows straight away from property (a).
\end{proof}
\begin{cor}\label{cor:etale perverse exact}
If $f\colon X\to Y$ is \'etale, then $f^*$
is $t$-exact with respect to the
perverse $t$-structure.
\end{cor}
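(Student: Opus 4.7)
The plan is to deduce this from the previous corollary together with the relative purity isomorphism for étale morphisms. Since $X$ and $Y$ are of finite type over $k$ and $f$ is étale, it is in particular quasi-finite and smooth of relative dimension zero. Part (a) of the previous corollary already yields that $f^*\colon D_c(Y)\to D_c(X)$ is right $t$-exact and that $f^!\colon D_c(Y)\to D_c(X)$ is left $t$-exact with respect to the perverse $t$-structure. It therefore suffices to upgrade the latter to left $t$-exactness of $f^*$.

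For this I would invoke relative purity: as recalled in the paragraph preceding Remark~\ref{rem:verdier dualizable}, for any smooth morphism $f$ of relative dimension $d$ there is a natural isomorphism $f^!(-)\cong f^!(\unit)\otimes f^*(-)$ with $f^!(\unit)$ $\otimes$-invertible, and on a regular base one computes $f^!(\unit)\cong\unit(d)[2d]$. In the étale case $d=0$, so $f^!(\unit)\cong\unit$ and hence $f^!\cong f^*$ as functors $D_c(Y)\to D_c(X)$. Consequently, the left $t$-exactness of $f^!$ translates directly into the left $t$-exactness of $f^*$, and combined with the right $t$-exactness already obtained we conclude that $f^*$ is $t$-exact for the perverse $t$-structure.

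There is no real obstacle here: the only ingredient beyond the previous corollary is the standard purity identification $f^!\cong f^*$ for étale $f$, which is part of the structure of any six-functors formalism of the kind considered in \ref{def:6FF}.
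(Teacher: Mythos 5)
Your argument is correct and is surely the intended one; the paper gives no explicit proof of this corollary because it is immediate from the preceding corollary together with the identification $f^!\cong f^*$ for étale $f$. One small remark: you do not need any regularity hypothesis here. The isomorphism $f^!\cong f^*$ for $f$ étale follows from relative purity in any six-functors formalism (for $f$ smooth of relative dimension $d$, $f^!(\unit)\cong\unit(d)[2d]$, and $d=0$ for étale $f$) — the regularity in Remark~\ref{rem:verdier dualizable} enters only when computing the absolute dualizing object $a^!(\unit)$ over a field, which is not what is needed here. With that clarified, the deduction — right $t$-exactness of $f^*$ and left $t$-exactness of $f^!\cong f^*$ from part (a) of the previous corollary — is exactly the expected reasoning.
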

\begin{cor}\label{cor:quasi-finite perverse exact}
If $f\colon X\to Y$ is both affine and quasi-finite, then
both functors $f_!$ and $f_*$ are $t$-exact with respect to the
perverse $t$-structure.
\end{cor}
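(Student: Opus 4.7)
The plan is to observe that this is a direct combination of two exactness properties already available: the quasi-finite part gives one half, and the affine part gives the other. There is no real obstacle; the work has essentially been done in the preceding statements.

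More precisely, for $f_!$, I would argue as follows. By part (b) of the preceding corollary, quasi-finiteness of $f$ gives that $f_!$ is right $t$-exact with respect to the perverse $t$-structure. On the other hand, Corollary \ref{cor:compact supprt perverse left exact} guarantees that, because $f$ is affine, $f_!$ is left $t$-exact with respect to the perverse $t$-structure. Combining these two facts yields $t$-exactness of $f_!$.

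For $f_*$, I would proceed symmetrically. Again by part (b) of the preceding corollary, quasi-finiteness of $f$ gives that $f_*$ is left $t$-exact for the perverse $t$-structure. Right $t$-exactness is exactly the content of Artin vanishing, condition (ii) of Definition \ref{def:Artin-Verdier}, which applies because $f$ is affine. Hence $f_*$ is $t$-exact as well.

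In short, the statement is a two-line proof that packages together the two directions of exactness coming from the affine and quasi-finite hypotheses separately; the only thing to check is that no further input is needed beyond what is already established in this section.
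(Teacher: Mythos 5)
Your proof is correct and is exactly the argument the paper is implicitly relying on (the paper gives no explicit proof for this corollary): right $t$-exactness of $f_!$ and left $t$-exactness of $f_*$ come from quasi-finiteness via part (b) of the unlabeled corollary on quasi-finite morphisms, while left $t$-exactness of $f_!$ comes from Corollary~\ref{cor:compact supprt perverse left exact} and right $t$-exactness of $f_*$ is Artin vanishing, both using affineness. The only nitpick is that ``preceding corollary'' should refer to the unlabeled one two statements back, not Corollary~\ref{cor:etale perverse exact}, but the intended reference is clear from context.
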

\begin{prop}
If $f\colon X\to Y$ is a finite flat morphism with both $X_\mathit{red}$
and $Y_\mathit{red}$ regular, then, for any $F$
that is both perverse and dualizable in $D(Y)$, there is
an isomorphism $f^!(F)\cong f^*(F)$ and $f^*(F)$ is both perverse
and dualizable in $D(X)$.
\end{prop}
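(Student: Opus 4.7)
The plan is to first show $f^!(\unit_Y)\cong\unit_X$, then use a projection-type identity to deduce $f^!(F)\cong f^*(F)$ for any dualizable $F$, and finally derive dualizability and perversity of $f^*(F)$ as formal consequences.

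Working componentwise (using that a finite flat morphism of finite presentation is both open and closed, hence surjective onto a union of connected components of $Y$), I may assume $Y$ is of some pure dimension $d$; then $X$ too is of pure dimension $d$ since $f$ is finite surjective. Applying the Verdier duality identity $f^!\circ D_Y\cong D_X\circ f^*$ from Remark \ref{rem:Verdier restriction} to $\unit_Y$, and using the computation $D_Z(\unit_Z)\cong\unit_Z(d)[2d]$ of Remark \ref{rem:verdier dualizable} for both $Z=Y$ and $Z=X$, one obtains
\[
f^!(\unit_Y)(d)[2d]\cong f^!D_Y(\unit_Y)\cong D_Xf^*(\unit_Y)\cong D_X(\unit_X)\cong\unit_X(d)[2d]\, ,
\]
so that $f^!(\unit_Y)\cong\unit_X$ after cancelling the $\otimes$-invertible twist and shift.

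For any dualizable $F$ in $D(Y)$ with dual $F^\wedge$, the adjunction formula $f^!\uHom(A,B)\cong\uHom(f^*(A),f^!(B))$ applied with $A=F^\wedge$ and $B=\unit_Y$ then yields
\[
f^!(F)\cong f^!\uHom(F^\wedge,\unit_Y)\cong\uHom(f^*(F^\wedge),f^!(\unit_Y))\cong\uHom((f^*F)^\wedge,\unit_X)\cong f^*(F)\, ,
\]
using symmetric monoidality of $f^*$ (which also gives $f^*(F^\wedge)\cong(f^*F)^\wedge$) and the previous step. The same symmetric monoidality shows $f^*(F)$ is dualizable.

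For perversity, the exactness corollary for quasi-finite morphisms (part (a) stated just before Corollary \ref{cor:etale perverse exact}) tells us that $f^*$ is right $t$-exact and $f^!$ is left $t$-exact for the perverse $t$-structure. Hence $F$ perverse implies $f^*(F)\in{}^pD^{\leq 0}_c(X)$ and $f^!(F)\in{}^pD^{\geq 0}_c(X)$; these coincide by what we just proved, so $f^*(F)$ lies in both and is therefore perverse. The main point is the identification $f^!(\unit_Y)\cong\unit_X$, which requires a careful bookkeeping of pure dimensions so that the Verdier duality twists on $X$ and $Y$ match up; once that is in place the rest is formal six-functor calculus.
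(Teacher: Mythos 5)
Your proof is correct, but it takes a genuinely different route from the paper's in each of the three steps. For the identification $f^!(\unit_Y)\cong\unit_X$, the paper appeals directly to relative purity (using that $X_\mathit{red}$, $Y_\mathit{red}$ are regular of the same dimension), whereas you re-derive this from the Verdier-duality compatibility $f^!\,D_Y\cong D_X\,f^*$ together with the formula $D_Z(\unit_Z)\cong\unit_Z(d)[2d]$ on regular schemes; your version makes explicit why the dimensions must match, at the cost of an extra Verdier-duality step. For the general isomorphism $f^!(F)\cong f^*(F)$, the paper invokes the canonical map $f^!(\unit_Y)\otimes f^*(F)\to f^!(F)$ and the fact that it is an isomorphism when $F$ is dualizable, while you route through the adjunction identity $f^!\uHom(A,B)\cong\uHom(f^*A,f^!B)$ applied to $A=F^\wedge$, $B=\unit_Y$; these are essentially the same underlying projection-formula fact, phrased differently. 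The most genuinely different part is your perversity argument: the paper reduces to the ordinary $t$-structure via Lemma \ref{lemma:dualizable hearts} ($F$ dualizable on a regular scheme is perverse iff $F[-\dim]$ is in the ordinary heart) and then uses $t$-exactness of $f^*$ for the ordinary $t$-structure, whereas you exploit the one-sided perverse exactness for quasi-finite morphisms (right $t$-exactness of $f^*$, left $t$-exactness of $f^!$) together with the identity $f^!(F)\cong f^*(F)$ just proved, sandwiching $f^*(F)$ into the perverse heart. Your route is arguably more conceptual, since it derives perversity directly from the coincidence of $f^*$ and $f^!$, and it would also apply verbatim to any quasi-finite $f$ for which one knows $f^!(F)\cong f^*(F)$.
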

\begin{proof}
There is always a canonical morphism
\[
f^!(\unit_Y)\otimes f^*(F)\to f^!(F)\, .
\]
The latter is an isomorphism for $F$ dualizable (this is a nice exercise
that does not use any property of $f$) and $f^!(\unit_Y)\cong\unit_X$
by relative purity (this times using that both $X$ and $Y$ are regular
up to reduction and of same dimension). If $F$ is dualizable in $D(Y)$,
since $Y_\mathit{red}$ is regular,
it is perverse if and only if $F[-\dim Y]$ is in the
heart of the ordinary $t$-structure (Lemma \ref{lemma:dualizable hearts}).
The functor $f^*$ preserves dualizable because it is symmetric monoidal.
Since $f^*$ is $t$-exact for the
ordinary $t$-structure, this proves the proposition.
\end{proof}

\begin{prop}\label{prop:basic vanishing}
Let $a\colon X\to Y$ be a flat affine morphism between
$k$-schemes of finite type, of relative dimension $d$.
\begin{enumerate}
\item[(a)] For any $F$ in $D^{\leq 0}_c(X)$, there is a dense open
subscheme $V$ of $Y$ such that the restriction
$a_*(F)_{|_V}$ belongs to $D^{\leq d}_c(V)$.
\item[(b)] For any $F$ in $D^{\geq 0}_c(X)$,
such that $F[\dim X]$ belongs to $^pD^{\geq 0}_c(X)$,
there is a dense open
subscheme $V$ of $Y$ such that the restriction
$a_!(F)_{|_V}$ belongs to $D^{\geq d}_c(V)$.
\end{enumerate}
Moreover, in both cases, if $F$ is strictly of geometric origin, then
the open subscheme $V$ may be chosen independently of $D$.
\end{prop}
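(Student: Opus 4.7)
The plan is to reduce, via generic base change and generic dualizability, to a dense open of $Y$ over which the ordinary and perverse $t$-structures differ by a predictable shift, and then apply Artin vanishing (or its dual for $a_!$) to convert the given ordinary bound on $F$ into the desired ordinary bound on $a_*(F)$ or $a_!(F)$.

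First, I would apply Theorem~\ref{thm:gbc} to $a$ and $F$ to obtain a dense open $V_0\subset Y$ over which $a_*(F)_{|V_0}$ is constructible and stable under base change. I then shrink $V_0$ to a dense open $V$ on which, writing $X_V = a^{-1}(V)$, the following simultaneously hold: $V_\mathrm{red}$ is regular of pure dimension $d_Y$; $(X_V)_\mathrm{red}$ is regular of pure dimension $d_X = d_Y + d$ (using flatness of $a$); $F_{|X_V}$ is dualizable in $D(X_V)$; and $a_*(F)_{|V}$ (resp. $a_!(F)_{|V}$) is dualizable in $D(V)$. Each of these holds on a dense open of $Y$, so a common $V$ exists. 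For~(a), on $X_V$ the object $F_{|X_V}\in D^{\leq 0}_c(X_V)$ is dualizable on a regular scheme of pure dimension $d_X$, so Lemma~\ref{lemma:dualizable hearts} (after an appropriate shift) converts the ordinary bound into the perverse bound $F_{|X_V}\in{}^pD^{\leq d_X}_c(X_V)$. Since $a_{|X_V}$ is affine, Artin vanishing (Definition~\ref{def:Artin-Verdier}(ii)) gives $a_*(F)_{|V}\in{}^pD^{\leq d_X}_c(V)$, and a second application of Lemma~\ref{lemma:dualizable hearts} on $V$ translates this back to $a_*(F)_{|V}\in D^{\leq d_X - d_Y}_c(V) = D^{\leq d}_c(V)$. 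Part~(b) is dual: the hypothesis $F[\dim X]\in{}^pD^{\geq 0}_c(X)$, restricted to $X_V$ and combined with $F\in D^{\geq 0}_c(X)$ through Lemma~\ref{lemma:dualizable hearts}, gives $F_{|X_V}\in{}^pD^{\geq d_X}_c(X_V)$; Corollary~\ref{cor:compact supprt perverse left exact} ($a_!$ is left $t$-exact for the perverse $t$-structure on an affine morphism) then yields $a_!(F)_{|V}\in{}^pD^{\geq d_X}_c(V)$, and Lemma~\ref{lemma:dualizable hearts} returns $a_!(F)_{|V}\in D^{\geq d_X - d_Y}_c(V) = D^{\geq d}_c(V)$.

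For the strictly-of-geometric-origin refinement, the open $V$ may be chosen motivically: Proposition~\ref{prop:generic dualizability} gives dualizability loci independent of $D$, Corollary~\ref{cor:gbc} gives the generic base change locus independent of $D$, and $a_*(F)$ (resp. $a_!(F)$) inherits the property of being strictly of geometric origin because $r_D$ commutes with $a_!$ by definition and with $a_*$ when $D$ behaves well. The only substantive work is bookkeeping the shifts in the perverse-versus-ordinary conversion; the hard part, conceptually, is the combination of Artin vanishing with the translation formula of Lemma~\ref{lemma:dualizable hearts}, but once that is fixed the argument is formal and there is no real obstacle beyond keeping track of dimensions.
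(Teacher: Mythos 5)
Your argument for part~(b) is essentially the paper's: the hypothesis already puts $F[\dim X]$ in $^pD^{\geq 0}_c(X)$ without needing Lemma~\ref{lemma:dualizable hearts} or any regularity of $X$; left perverse $t$-exactness of $a_!$ (Corollary~\ref{cor:compact supprt perverse left exact}) then gives $a_!(F)[\dim X]$ in $^pD^{\geq 0}_c(Y)$; and after shrinking $Y$ so that $a_!(F)$ is dualizable and $Y_{\mathit{red}}$ regular, Lemma~\ref{lemma:dualizable hearts} applied \emph{on $Y$ only} finishes. The invocations of dualizability of $F_{|X_V}$ and regularity of $X_V$ are superfluous there.

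Part~(a) has a genuine gap. You shrink $Y$ to $V$ and then claim that on $X_V=a^{-1}(V)$ one can arrange $(X_V)_{\mathit{red}}$ regular and $F_{|X_V}$ dualizable. This is false in general: shrinking $Y$ only removes entire fibers of $a$ from $X$, which does nothing to singularities or non-dualizability loci that dominate $Y$. The extreme case $Y=\spec k$ already kills the argument (there is no nontrivial dense open of a field spectrum, yet $X$ may be singular and $F$ non-dualizable); more generally, if $X=Y\times_k S$ with $S$ singular affine, no $V$ makes $X_V$ regular. Consequently, the two applications of Lemma~\ref{lemma:dualizable hearts} on $X_V$ that drive your chain of inequalities are not available, and the whole deduction collapses.

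The paper avoids this by \emph{stratifying $X$ rather than $Y$} and running an induction on the relative dimension $d$. One chooses a nowhere dense closed $i\colon Z\to X$ with affine open complement $j\colon U\to X$ so that $U_{\mathit{red}}$ is regular and $j^*F$ is dualizable. On $U$ one applies Lemma~\ref{lemma:dualizable hearts} to get $j_!\, j^*F[\dim U]$ perverse $\leq 0$, Artin vanishing for the affine map $a$ to bound $a_*\, j_!\, j^*F$ perversely, then (after shrinking $Y$ so that this pushforward becomes dualizable and $Y_{\mathit{red}}$ regular) Lemma~\ref{lemma:dualizable hearts} on $Y$ to return to the ordinary bound $D^{\leq d}$. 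The closed stratum $Z$, which is where the singularities and non-dualizability of $F$ live, is handled by the \emph{inductive hypothesis} (after arranging $Z$ flat over $Y$ by generic flatness), and the two pieces are combined via the localization cofiber sequence $a_*\, j_!\, j^*F\to a_*F\to a_*\, i_*\, i^*F$. This induction is not optional bookkeeping — it is how the argument gets around exactly the point where your reduction fails.

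The refinement on strict geometric origin is otherwise in order: generic dualizability (Proposition~\ref{prop:generic dualizability}) and generic base change (Corollary~\ref{cor:gbc}) let one make all shrinkings motivically, and this carries through the induction because each new dualizability or flatness locus is chosen either in $\DM$ or purely geometrically.
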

\begin{proof}
For (a), We proceed by induction on $d$.
Since we are allowed to shrink $Y$ at will,
we may assume $Y_{\mathit{red}}$ to be regular.
Since $D$ is insensitive to nil-immersions, we may as well assume that $Y$
is regular.
For $d\leq 0$, this is obvious:
since $X$ is affine and quasi-finite over $Y$, we know that $a_*$
is $t$-exact by Corollary \ref{cor:quasi-finite perverse exact}.
For $d>0$, given $F$ constructible in $D^{\leq 0}(X)$,
we choose $i\colon Z\to X$ a nowhere dense closed immersion with
affine complement $j\colon U\to X$ such that $U_\mathit{red}$ is regular
and everywhere of dimension $\dim X$,
and $j^*F$ is dualizable in $D(U)$
(if $F$ is strictly of geometric origin we make this choice in $\DM$
to make this independent of $D$,
by Proposition \ref{prop:generic dualizability}). We then have
$j^*F[\dim U]$ in $^pD^{\leq 0}(U)$ hence $j_!\, j^*F[\dim U]$
is in $^pD^{\leq 0}(X)$.
Therefore, Artin vanishing tells us that
$a_*\, j_!\, j^*F[\dim U]$ belongs to $^pD^{\leq 0}(Y)$.
Shrinking $Y$, we may impose that $a_*\, j_!\, j^*F$
is dualizable (if $F$ is strictly of geometric origin, so is
$a_*\, j_!\, j^*F$ and we make this shrinking at the motivic level,
as in the proof of \ref{prop:generic dualizability}).
Since $d=\dim U - \dim Y$, by virtue of Lemma \ref{lemma:dualizable hearts},
this means that $a_*\, j_!\, j^*F[d]$ belongs to $D^{\leq 0}(Y)$,
hence $a_*\, j_!\, j^*F$ to $D^{\leq d}(Y)$.
By virtue of Grothendieck's generic flatness theorem,
shrinking $Y$, we may assume that $Z$ is flat over $Y$ as well
(this choice is purely geometric and does not depend on $F$ nor on $D$).
By induction, since the relative dimension of $Z$ over $Y$ is $< d$,
possibly after shrinking $Y$ one more time, we have
$(ai)_*\, i^*(F)$ in $D^{\leq d-1}(Y)$.
The cofiber sequence
\[
a_*\, j_!\, j^*F\to a_* F\to a_*\, i_*\, i^* F
\]
thus implies that $a_*(F)$ belongs to $D^{\leq d}(Y)$.

In order to prove (b), this is more
direct. Let $F$ be in $D^{\geq 0}_c(X)$, such that
$F[\dim X]$ belongs to $^pD^{\geq 0}_c(X)$.
Since $a_!$ is left $t$-exact with respect to the perverse
$t$-structure (Cor.~\ref{cor:compact supprt perverse left exact}),
we have $a_!(F)[\dim X]$ in $^pD^{\geq 0}_c(Y)$.
Shrinking $Y$, we may assume that $a_!(F)$ is
dualizable
and that $Y_{\mathit{red}}$ is regular,
which implies by Lemma \ref{lemma:dualizable hearts} that
\[
a_!(F)[\dim X - \dim Y]=a_!(F)[d]
\]
belongs to $D^{\geq 0}_c(Y)$.
\end{proof}

\begin{rem}\label{rem:perverse Artin}
Let $a\colon X\to\spec k$ be a separated morphism of finite type.
For any $F$ in $D(X)$, we can define
$\Gamma(X,F)=a_*(F)$ and $\Gamma_c(X,F)=a_!(F)$.
Taking cohomology in the sense of the ordinary $t$-structure,
we get objects
\[
H^i(X,F)=H^i(\Gamma(X,f))\quad\text{and}\quad H^i_c(X,F)=H^i(\Gamma_c(X,F))
\]
in the heart $D^\heartsuit(\spec k)=D^{\leq 0}(\spec k)\cap D^{\geq 0}(\spec k)$.

%
%
The following statement was
observed by Beilinson \cite{Beilinson}. Recall the notion
of hyperplane section transverse to an object of $D$,
as defined in \ref{def:good position}.
\end{rem}

\begin{thm}\label{thm:Beilinson}
Let $X$ be an affine scheme of finite type over $k$
and $e\colon X\to \bar X$ an affine open immersion into a projective
scheme. We consider given an affine open immersion $u\colon U\to X$
with $U_\mathit{red}$ regular.
We assume that $F$ is dualizable and in the heart of the ordinary $t$-structure
on $D_c(U)$. If $j\colon X-H\hookrightarrow X$
denotes the embedding of the complement of a hyperplane section that is
transverse to $u_!F$, then $H^q(X,j_!\, j^*u_!F)=0$
for any $q\neq \dim X$.
\end{thm}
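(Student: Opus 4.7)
The plan is to establish the upper and lower vanishings separately, by
rewriting $\Gamma(X,j_!\,j^*u_!F)$ in two different ways---one adapted
to Artin's vanishing through Proposition~\ref{prop:basic vanishing}(a),
and another adapted to its Verdier-dual form through
Corollary~\ref{cor:compact supprt perverse left exact}---with the
transversality hypothesis supplying the passage between the two.

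First I would set $d=\dim X$, $H_U=U\cap H$, $U'=U-H_U$ and $F'=F|_{U'}$,
and form the affine open immersion $v\colon U'\to X$ obtained as
$U'\into U\overset{u}{\to}X$. Open-immersion base change identifies
$j_!\,j^*u_!F\cong v_!F'$. One may assume $U$ dense of dimension $d$
and the hyperplane chosen to avoid the irreducible components of $U$,
so that $U'_{\mathrm{red}}$ is regular of dimension $d$ and $F'$
remains dualizable and in the ordinary heart on $U'$. For the upper
bound $H^q=0$ when $q>d$, I would apply
Proposition~\ref{prop:basic vanishing}(a) to the structural morphism
$a\colon X\to\spec k$, which is flat affine of relative dimension $d$:
since extension by zero along the open immersion $v$ is ordinary
$t$-exact, $v_!F'$ lies in $D^{\leq 0}_c(X)$, whence
$a_*v_!F'\in D^{\leq d}_c(\spec k)$.

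For the lower bound ($H^q=0$ when $q<d$) the transversality hypothesis
is essential. Together with the properness of $\bar a\colon\bar X\to\spec k$
(so $\bar a_*=\bar a_!$) it gives
$\Gamma(X,j_!\,j^*u_!F)\cong\bar c_!K$, with $\bar c\colon\bar V\to\spec k$
the structural morphism of the affine variety $\bar V=\bar X-\bar H$
and $K=\bar\jmath^*e_*u_!F$. Open-open base change rewrites
$K\cong(e')_*u'_!F'$, where $u'\colon U'\to X-H$ and
$e'\colon X-H\to\bar V$ are the affine open immersions in the evident
Cartesian diagram. The key claim is that $K[d]$ is perverse on $\bar V$:
by Lemma~\ref{lemma:dualizable hearts}, $F'[d]$ is already perverse on
$U'$; $u'_!$ is perverse $t$-exact by
Corollary~\ref{cor:quasi-finite perverse exact} since $u'$ is affine and
quasi-finite; and $(e')_*$ is perverse $t$-exact, its right
$t$-exactness being Artin vanishing (Definition~\ref{def:Artin-Verdier}(ii),
$e'$ affine) and its left $t$-exactness being automatic because its
left adjoint $(e')^*=(e')^!$ is $t$-exact for the open immersion $e'$
(Corollary~\ref{cor:etale perverse exact}). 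Since $\bar V$ is affine,
Corollary~\ref{cor:compact supprt perverse left exact} makes $\bar c_!$
perverse left $t$-exact, so $\bar c_!(K[d])$ lies in
${}^pD^{\geq 0}(\spec k)=D^{\geq 0}(\spec k)$, i.e.\
$\bar c_!K\in D^{\geq d}(\spec k)$, as desired.

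The step I expect to be the main obstacle is this exchange between
$*$- and $!$-pushforwards. The operator $a_*v_!$ is neither a pure
$*$- nor a pure $!$-operation, so neither half of Artin's theorem
applies to it directly; transversality is exactly what moves the
open/closed splitting across the compactification $e\colon X\to\bar X$,
turning $a_*v_!$ on $X$ into the pure affine $!$-pushforward $\bar c_!$
on $\bar V$. Once that move is made, the Artin/Verdier-dual pair of
perverse exactness results closes the argument, with the identification
$K\cong(e')_*u'_!F'$ and the perverse $t$-exactness of the ``mixed''
functor $(e')_*$ being the technical heart of the step.
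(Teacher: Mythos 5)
Your proposal is correct and follows essentially the same strategy as the paper's proof: transversality converts $\Gamma(X,j_!j^*u_!F)$ into $\Gamma_c(\bar X-\bar H,K)$, the upper bound comes from Proposition~\ref{prop:basic vanishing}(a) on $X$, and the lower bound from Proposition~\ref{prop:basic vanishing}(b) on the affine variety $\bar X-\bar H$ after checking $K[d]$ is perverse. The only (cosmetic) divergence is that you rewrite $K$ via open--open base change as $(e')_*u'_!F'$ and push a perverse sheaf forward from $U'$, while the paper first shows $e_*u_!F[d]$ is perverse on $\bar X$ and then restricts along $\bar\jmath$ --- both rest on the same Lemma~\ref{lemma:dualizable hearts} and Corollaries~\ref{cor:etale perverse exact}, \ref{cor:quasi-finite perverse exact}.
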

\begin{proof}
Let $d=\dim X$.
Keeping track of the notations of \ref{def:good position},
the transversality assumption on $H$ implies that
we have canonical isomorphisms
\[
H^q_c(\bar X - \bar H, e_*\, u_!F)\cong H^q(X,j_!\, j^*\, u_!F)
\]
with both $X$ and $\bar X- \bar H$ affine of dimension $d$.
By virtue of Proposition \ref{prop:basic vanishing}~(a),
since all functors $j_!$, $j^*$ and $u_!$ are $t$-exact with
respect to the ordinary $t$-structure,
$H^q(X,j_!\, j^*\, u_!F)$ vanishes for $q>d$.
On the other hand, by virtue of Lemma \ref{lemma:dualizable hearts},
we see that $F[d]$ is perverse on $U$, and since
both $u$ and $e$ are affine immersions, Corollaries \ref{cor:etale perverse exact}
and \ref{cor:quasi-finite perverse exact} imply that
$e_*\, u_!F[d]$ is perverse
on $\bar X - \bar H$. Therefore, Proposition \ref{prop:basic vanishing}~(b)
implies that $H^q_c(\bar X - \bar H, e_*u_!F)$ vanishes for $q<d$.
\end{proof}
\section{Uniformly perverse motivic generators}
\begin{paragr}
Let $X$ be a scheme of finite type over a field $k$.
We denote by $M(X)$ the motive of $X$, i.e. the object of $\DM(\spec k)$
defined by
\[
M(X)=a_!\, a^!(\ZZ)
\]
with $a\colon X\to\spec k$ the structural morphism.\footnote{If we define $\DM(X)$
through $h$-motives, its is a $(\mathbf{P}^1,\infty)$-stabilization
of the $\mathbf{A}^1$-localization of the derived $\infty$-category of
sheaves of abelian groups on the big site of $k$ with respect to the $\h$-topology.
The object $M(X)$ is then what remains of the sheaf represented by $X$ through all this
process.} This construction defines a functor from the category of schemes of finite type over $k$ to the stable $\infty$-category $\DM(\spec k)$.
Given a closed subscheme $Z$ of $X$, we define $M(X,Z)$ through the
cofiber sequence
\[
M(Z)\to M(X)\to M(X,Z)
\]
For any object $A$ in $\DM(\spec k)$, we thus have a fiber sequence in the
derived category of abelian groups
\[
\RHom(M(X,Z),A)\to \RHom(M(X),A)\to \RHom(M(Z),A)
\]
with $\RHom$ the cochain complex of maps in $\DM(\spec k)$ (the latter is $\ZZ$-linear by
construction). Since $(a_!a^!(\ZZ))^\wedge\cong a_*a^*(\ZZ)$ by Verdier duality,
we observe that
\[
\RHom(M(X),A)\cong\RHom(\ZZ,a_*a^*(\ZZ))\cong \RHom(\ZZ_X,A_X)
\]
where third instance of $\RHom$ is the cochain complex of maps in $\DM(X)$,
$\ZZ_X=\unit_X$ is the $\otimes$-unit of $\DM(X)$, and $A_X=a^*(A)$ with $a$
the structural morphism of $X$ as above.
Therefore,
if $j\colon X-Z\hookrightarrow X$ is the open immersion
of the complement of $Z$ in $X$, and if $i\colon Z\hookrightarrow X$
is the embedding of $Z$
into $X$, the cofiber sequence
\[
j_!\, j^*(A_X)\to A_X\to i_*\, A_Z
\]
thus shows that we have a canonical isomorphism
\[
\RHom(M(X,Z),A)\cong \RHom(\ZZ_X,j_!\, j^*A_X)\, .
\]
Let $h:H\hookrightarrow X$ be another closed embedding.
We have a cocartesian square
\[
\begin{tikzcd}
M(Z\cap H)\ar{r}\ar{d}& M(Z)\ar{d}\\
M(H)\ar{r}&M(H\cup Z)
\end{tikzcd}
\]
hence cartesian squares of the form below.
\[
\begin{tikzcd}
\RHom(M(X,Z\cup H),A)\ar{r}\ar{d}&\RHom(M(X,Z),A)\ar{d}\\
\RHom(M(X,H),A)\ar{r}&\RHom(M(X,H\cap Z),A)
\end{tikzcd}
\]
Those correspond to cartesian squares
\[
\begin{tikzcd}
\RHom(\ZZ_X,u_!\, u^*\, j_!\, j^* A_X)\ar{r}\ar{d}&\RHom(\ZZ_X,j_!\, j^*A_X)\ar{d}\\
\RHom(\ZZ_X,u_!\, u^*A_X)\ar{r}&\RHom(\ZZ_X,v_!\, v^*A_X)
\end{tikzcd}
\]
where $u\colon X - H\hookrightarrow X$ is the complement of $H$
and $v\colon X - (H\cap Z)\hookrightarrow X$ is the complement of $H\cap Z$
(observe that $u_!\, u^*A_X\cong u_!\, u^*\, v_!\, v^*A_X)$
and $j_!\, j^*A_X\cong j_!\, j^*\, v_!\, v^*A_X)$).
\end{paragr}
\begin{defn}\label{def:Lefschetz filtration}
Let $X$ be an affine scheme of finite type over a field $k$ of dimension $d$.
A \emph{Lefschetz subscheme of $X$} is a closed subscheme $Z$ of $X$ of the form
$Z=W\cup H$, where $w\colon W\hookrightarrow X$ is a closed subscheme
of dimension $<d$, with affine complement,
that contains all irreducible components of $X$ of dimension $<d$,
such that $X-W$ is regular,
and $H$ is an hyperplane section that is transverse to the
motivic sheaf $j_!\, j^*\ZZ_X$
in the sense of \ref{def:good position},
where $j\colon X-W\to X$ denotes the complement of $W$ in $X$.
The \emph{rank of the pair $(X,Z)$} is the trace of the identity of $M(X,Z)[-d]$
in $\DM(\spec k)$; we will denote it
\[
\mathit{rk}(X,Z)=(-1)^d\mathit{Tr}\big(1_{M(X,Z)}\big)
\in\ZZ
\]
(this is always an integer because it coincides with the Euler characteristic
of its $\ell$-adic realizations \cite[Cor.~2.2.7]{MotGenBC}).
A \emph{Lefschetz pair} is a pair $(X,Z)$ that consists of an affine
scheme $X$ and of a Lefschetz subscheme $Z\subset X$.

A \emph{Lefchetz filtration} of $X$ is a finite sequence of closed subschemes
of the form
\[
\varnothing=X_{-1}\subset X_0\subset\ldots\subset X_{d-1}\subset X_d=X
\]
where, for $0\leq i\leq d$, each $X_{i-1}$ is a Lefschetz subscheme of $X_i$.

Lefschetz filtrations exist in practice thanks to the following proposition.
\end{defn}
\begin{prop}\label{prop:existence Lefschetz pairs}
Let $k$ be an infinite field.
Then, for any affine $k$-scheme $X$ and any closed subscheme $Y$ in $X$
of dimension $<\dim X$, there is a Lefschetz subscheme $Z$ of $X$
containing $Y$. 
If $k$ is a finite field, this is true only after pulling back $X$
along a finite extension of $k$.
\end{prop}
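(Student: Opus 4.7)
The plan is to take $Z = W \cup H$, with $W$ a hypersurface in $X$ absorbing all ``bad'' loci and $H$ a sufficiently generic hyperplane section. First fix a closed immersion $X \hookrightarrow \mathbf{A}^n_k$ and let $\bar X \subseteq \PP^n_k$ be its projective closure, so that $e \colon X \hookrightarrow \bar X$ is an affine open immersion into a projective scheme and the notion of hyperplane section of \ref{def:good position} is available with respect to $\PP = \PP^n_k$. Since $\DM$ is invariant under nilpotent thickenings, we may replace $X$ by its reduction.

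Write $X = \bigcup_i X_i \cup \bigcup_\alpha X_\alpha$ with the $X_i$ the irreducible components of $X$ of dimension $d$ and the $X_\alpha$ those of dimension $<d$, and set $W_0 = Y \cup \operatorname{Sing}(X) \cup \bigcup_\alpha X_\alpha$. Since $X$ is reduced of finite type over a field, hence excellent, $\operatorname{Sing}(X)$ is closed of dimension $<d$, so $W_0$ is closed of dimension $<d$. Let $\mathfrak{p}_i \subset \Gamma(X,\mathcal{O}_X)$ be the minimal primes corresponding to the $X_i$; the ideal $I(W_0)$ is contained in none of them (else $X_i \subseteq W_0$, contradicting the dimension bound), so prime avoidance produces $f \in I(W_0)$ with $f \notin \mathfrak{p}_i$ for all $i$. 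Put $W = V(f)$. Then $W \supseteq W_0$ contains $Y$, all irreducible components of $X$ of dimension $<d$, and $\operatorname{Sing}(X)$; its complement $X - W = D(f)$ is a principal open of the affine scheme $X$, hence affine, and it avoids $\operatorname{Sing}(X)$, hence is regular; finally, since $f$ cuts out a hypersurface on each $X_i$, one has $\dim W \leq d - 1$.

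With $W$ fixed, let $j \colon X - W \hookrightarrow X$ be the open immersion and consider $F = j_!\, j^* \ZZ_X$, a constructible object of $\DM(X)$. Applying Theorem \ref{thm:weak Lefschetz} to $F$ along the diagram $X \oto{e} \bar X \hookrightarrow \PP = \PP^n_k$ yields a dense open $U \subseteq \PP^\wedge$ of the dual projective space such that for every $t \in U(k)$ the associated hyperplane section $\bar H_t$ of $\bar X$ is transverse to $F$ in the sense of \ref{def:good position}. Because $k$ is infinite and $\PP^\wedge$ is a projective space over $k$, $U(k)$ is non-empty; picking such a point yields a hyperplane section $\bar H$ of $\bar X$ and its trace $H = \bar H \cap X$ in $X$. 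The subscheme $Z = W \cup H$ then satisfies all the clauses of Definition \ref{def:Lefschetz filtration} and contains $Y$.

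If $k$ is finite, the non-empty $k$-variety $U$ acquires a rational point after extending scalars along some finite extension $k'/k$; running the construction above over $k'$ (with $X$, $\bar X$, $W$, $Y$ pulled back) produces the required Lefschetz subscheme. The main subtlety is to choose $W$ so that the clauses on dimension, affineness of the complement, regularity, and containment of $Y$ and of the low-dimensional components all hold at once; this is handled by the single application of prime avoidance above. Once $W$ is fixed, the transversality of a generic hyperplane is a formal consequence of the weak Lefschetz shadow.
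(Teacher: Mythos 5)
Your proof is correct and follows the same strategy as the paper's: absorb $Y$, the singular locus, and the low-dimensional irreducible components into $W$, enlarge $W$ so that its complement is affine and regular, then apply Theorem \ref{thm:weak Lefschetz} (via Corollary \ref{cor:abstract weak Lefschetz}) to $j_!\ZZ_{X-W}$ to produce a transverse hyperplane section and set $Z = W\cup H$. The only difference is that you spell out the prime avoidance argument producing $W = V(f)$ as a hypersurface with affine complement, whereas the paper dispatches this enlargement in a single clause.
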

\begin{proof}
Define $W$ to be the union of $Y$ with the singular locus of $X$
as well as with the irreducible components of $X$ of dimension $<d$.
We may enlarge $W$ so that its complement if affine, and
then apply Corollary \ref{cor:abstract weak Lefschetz} to $F=j_!\, \ZZ_{X-W}$,
where $j\colon X-W\to X$ denotes the complement of $W$ in $X$,
in order
to find a section $H$ that is transverse to $F$, and put $Z=W\cup H$.
The case of a finite field is proved similarly, applying
Theorem \ref{thm:weak Lefschetz} instead of its corollary:
the dense open space $U$ of hyperplane sections will only have
rational points after a finite extension of $k$.
\end{proof}
\begin{lemma}\label{lemma:Lefschetz pairs are concentrated}
Let $X$ be an affine scheme of dimension $d\geq 0$,
$Z$ a Lefschetz subscheme of $X$, and $j\colon X-Z\hookrightarrow X$
the corresponding open immersion. Then $\mathit{rk}(X,Z)$
is a non-negative integer. Furthermore,
for any Artin-Verdier six-functors formalism $D$ defined on $k$-schemes
of finite type, the induced realization functor
\[
r_D\colon\DM(\spec k)\to D(\spec k)
\]
yields
\[
H^q(X,j_!\unit_{X-Z})\cong
\begin{cases}
0&\text{if $q\neq d$,}\\
r_D(M(X,Z)[-d])^\wedge&\text{if $q=d$.}
\end{cases}
\]
(where the cohomology is defined using the ordinary $t$-structure
as in Remark \ref{rem:perverse Artin}).
The rank of $H^d(X,j_!\unit_{X-Z})$ (defined as the trace of its identity)
is equal to the class of $\mathit{rk}(X,Z)$
in $\pi_0\map_{D(\spec k)}(\unit,\unit)$.

In particular, if $k$ is separably closed,
for any prime $\ell$ distinct from the characteristic of $k$,
we can compute the $\ell$-adic cohomology of the pair $(X,Z)$ motivically:
\[
H^q_\et(X,j_!\ZZ_\ell)
\cong\begin{cases}
0&\text{if $q\neq d$,}\\
\ZZ_\ell^{\mathit{rk}(X,Z)}&\text{if $q=d$.}
\end{cases}
\]
\end{lemma}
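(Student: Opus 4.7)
The plan is to reduce the vanishing statement to Theorem~\ref{thm:Beilinson} (Beilinson's vanishing) applied to the Lefschetz decomposition $Z = W \cup H$, then upgrade the non-vanishing group to an identification with the motivic dual, and finally deduce the rank statements.

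First I would establish that $H^q(X, j_! \unit_{X-Z}) = 0$ for $q \neq d$ in every Artin-Verdier formalism $D$. Set $U := X - W$ with open immersion $u\colon U \hookrightarrow X$: this $u$ is affine with $U_{\mathit{red}}$ regular by the definition of a Lefschetz subscheme, and $F := \unit_U$ is dualizable and lies in the heart of the ordinary $t$-structure. The transversality of $H$ to $u_!F = u_!\unit_{X-W}$ is exactly what the Lefschetz definition guarantees, and it persists after realization because $r_D$ commutes with the six operations and therefore preserves the isomorphism that expresses transversality. Open-immersion base change identifies $(\jmath_H)_!(\jmath_H)^*u_!\unit_{X-W}$ with $j_!\unit_{X-Z}$, where $j\colon X - Z \hookrightarrow X$. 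Theorem~\ref{thm:Beilinson} applied in $D$ then yields the claimed vanishing.

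Next, to identify $H^d(X,j_!\unit_{X-Z})$ with $r_D(M(X,Z)[-d])^\wedge$, I would use the motivic identity already derived at the start of the section containing Definition~\ref{def:Lefschetz filtration}: specialising the isomorphism $\RHom(M(X,Z), A) \cong \RHom(\unit_X, j_! j^* A_X)$ to $A = \unit$ gives $M(X,Z)^\wedge \cong \Gamma(X, j_!\unit_{X-Z})$ in $\DM(\spec k)$. Applying $r_D$, which commutes with $j_!$ always and with $\Gamma = a_*$ because the Artin-Verdier $D$'s under consideration behave well (see Remark~\ref{rem:gbc}), transports this to an isomorphism in $D(\spec k)$. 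The first step collapses $\Gamma(X, j_!\unit_{X-Z})$ to its ordinary $H^d$ placed in degree $d$, and using $A^\wedge[d] \cong A[-d]^\wedge$ rearranges this into $H^d(X, j_!\unit_{X-Z}) \cong r_D(M(X,Z)[-d])^\wedge$.

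For the rank, the trace of the identity of $H^d(X,j_!\unit_{X-Z})$ equals the trace of the identity of $r_D(M(X,Z)[-d])$, since dualization preserves traces and $r_D$, being symmetric monoidal, sends $\mathit{Tr}(1_{M(X,Z)[-d]})$ to its image in $\pi_0\map_{D(\spec k)}(\unit,\unit)$; and by the shift convention $\mathit{Tr}(1_{M(X,Z)[-d]}) = (-1)^d \mathit{Tr}(1_{M(X,Z)}) = \mathit{rk}(X,Z)$. Non-negativity is immediate from the known $\ell$-adic realization invoked already in the definition of $\mathit{rk}(X,Z)$, because that integer then equals the $\ZZ_\ell$-rank of a free $\ZZ_\ell$-module. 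For the $\ell$-adic conclusion over a separably closed field, I would specialize to $D = D(-,\ZZ_\ell)$ and $D = D(-,\FF_\ell)$ (both Artin-Verdier) and combine their vanishings via the Bockstein sequence $0 \to H^d(X,j_!\ZZ_\ell)/\ell \to H^d(X,j_!\FF_\ell) \to H^{d+1}(X,j_!\ZZ_\ell)[\ell] \to 0$ with $H^{d\pm 1}(X, j_!\FF_\ell) = 0$ to deduce $\ell$-torsion freeness of $H^d(X, j_!\ZZ_\ell)$, hence freeness of rank $\mathit{rk}(X,Z)$.

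The main obstacle I anticipate is the bookkeeping around realization: ensuring (i) that the transversality of $H$ to $j_{W,!}\unit_{X-W}$, formulated in $\DM$, genuinely translates into the hypothesis of Beilinson's theorem in each Artin-Verdier $D$ (for which one uses that $j_{W,!}\unit_{X-W}$ is strictly of geometric origin, together with Corollary~\ref{cor:abstract weak Lefschetz}), and (ii) that the motivic computation $M(X,Z)^\wedge \cong \Gamma(X, j_!\unit_{X-Z})$ passes through $r_D$ — which requires $r_D$ to commute with the specific $a_*$ that appears, and thus uses the behaves-well hypothesis bundled into the Artin-Verdier framework.
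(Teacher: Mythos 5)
Your overall strategy is the same as the paper's: reduce the vanishing to Theorem~\ref{thm:Beilinson} via $Z = W\cup H$ with $U = X-W$, identify $H^d$ with the dual of $r_D(M(X,Z)[-d])$ using $M(X,Z)^\wedge \cong a_*j_!j^*\ZZ_X$ and the fact that $r_D$ commutes with the six operations, and read off the rank from the symmetric-monoidality of $r_D$. However, there is one genuine gap in the $\ell$-adic step: you invoke $D(-,\ZZ_\ell)$ as an Artin-Verdier six-functors formalism, and it is not one. By the paper's Definition~\ref{def:t-struct}, the ordinary $t$-structure must satisfy condition (f), that dualization of dualizable objects is $t$-exact; the paper explicitly remarks that this ``is only reasonable with $\FF$-linear coefficients for some field $\FF$'', and the example following Definition~\ref{def:Artin-Verdier} accordingly lists $D(-,\FF)$ for $\FF$ a finite field, $D(-,\QQ_\ell)$, and $D(-,\FF_\mathcal{U})$, but not $D(-,\ZZ_\ell)$. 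Over $\ZZ_\ell$, the dual of a finitely generated module with torsion is not concentrated in a single degree, so (f) fails. You therefore cannot appeal to the Beilinson vanishing for $\ZZ_\ell$-coefficients directly.

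The repair is exactly what the paper does: apply the concentration result only in the two honest Artin-Verdier formalisms $D(-,\QQ_\ell)$ and $D(-,\FF_\ell)$, then deduce the $\ZZ_\ell$-statement a posteriori. Writing $C$ for the $\ell$-adic realization of $a_*j_!j^*\ZZ_X[d]$ (a perfect $\ZZ_\ell$-complex), the vanishing of $H^q(C\otimes^L\FF_\ell)$ for $q\neq 0$ gives, via the universal-coefficients/Bockstein sequence and Nakayama, that $H^q(C)=0$ for $q\neq 0$; and it is the sequence one degree lower, $0\to H^{-1}(C)/\ell\to H^{-1}(C\otimes^L\FF_\ell)\to H^0(C)[\ell]\to 0$, that yields $\ell$-torsion-freeness of $H^0(C)$ --- your displayed Bockstein sequence at degree $d$ is off by one for this purpose, even though you correctly indicate which $\FF_\ell$-groups must vanish. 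Finally the rank over $\QQ_\ell$ is matched with $\mathit{rk}(X,Z)$ by preservation of traces; the paper even spells out the $K_0(\QQ_\ell)\cong K_0(\ZZ_\ell)\cong K_0(\FF_\ell)$ comparison to see that the $\QQ_\ell$- and $\FF_\ell$-dimensions agree, though the Bockstein-plus-Nakayama route you sketch (once corrected) is equally valid. Your remark that non-negativity ``is immediate from the known $\ell$-adic realization'' would also be better phrased: it is only after the concentration and freeness have been established, in a formalism with tannakian heart such as $D(-,\QQ_\ell)$, that the trace becomes a dimension and hence non-negative --- this is not already baked into the definition of $\mathit{rk}(X,Z)$, which only records integrality.
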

\begin{proof}
Let $D$ be an Artin-Verdier six-functors formalism defined on $k$-schemes
of finite type. We then have colimit preserving functors
\[
r_D\colon \DM(X)\to D(X)
\]
that commute with the six operations.
With the notations of Remark \ref{rem:perverse Artin},
if $M(X,Z)^\wedge\cong a_*j_!j^*\ZZ_X$
denotes the dual of $M(X,Z)$ in $\DM(\spec k)$, we thus
have
\[
r_D(M(X,Z)^\wedge)\cong\Gamma(X,j_!\, j^*\unit_{X})\, .
\]
Therefore,
\[
r_D(M(X,Z)[-d])\cong \Gamma(X,j_!\, j^*\unit_{X}[d])^\wedge
\]
is concentrated in degree $0$ by virtue of Theorem \ref{thm:Beilinson}
applied to $M=\ZZ_W$ and $U=W$ (with the notations of
Definition \ref{def:Lefschetz filtration}).
Since $r_D$ is symmetric monoidal, it preserves the formation of traces
of endomorphisms of dualizable objects.
For any $D$ such that the constructible
heart $D^\heartsuit_c(\spec k)$ is tannakian over $\QQ$,
we get that $\mathit{rk}(X,Z)$ is the Euler
characteristic of the image of $a_*j_!j^*\ZZ_X[d]$ through $r_D$,
and thus a non-negative integer. For instance, if $\ell$ is a prime
not equal to the characteristic of $k$, and if $\bar k$ is a
separable closure of $k$, we can associate to each $k$-scheme $X$ the
$\bar k$-scheme $\bar X$ obtained by pulling back $X$ to $\spec{\bar k}$.
The assignment
\[
X\mapsto D(\bar X,\QQ_\ell)
\]
then defines an Artin-Verdier six functors formalism $D$
such that $D^\heartsuit_c(\spec k)$ is the category of
finite dimensional $\QQ_\ell$-vector
spaces. This proves that $\mathit{rk}(X,Z)$ is a non-negative
integer.

Let us assume that $k=\bar k$ is separably closed
(in order to simplify the notations). We can
analyze the situation a little bit further as follows.
The morphisms of rings $\QQ_\ell\leftarrow\ZZ_\ell\to\FF_\ell$
induce isomorphisms on their Grothendieck rings
\[
\ZZ=K_0(\QQ_\ell)\cong K_0(\ZZ_\ell)\cong K_0(\FF_\ell)=\ZZ\, . 
\]
Therefore, for any perfect complex of $\ZZ_\ell$-modules $C$, we always have:
\[
\sum_q(-1)^q\dim_{\QQ_\ell} H^q(C)\otimes\QQ
=\sum_q(-1)^q\dim_{\FF_\ell}H^q(C\otimes^L_{\ZZ_\ell}\FF_\ell)\, .
\]
If ever both $C\otimes\QQ$ and $C\otimes^L_{\ZZ_\ell}\FF_\ell$ are concentrated in
cohomological degree $d$, this means that the dimensions of
$H^d(C\otimes^L_{\ZZ_\ell}\FF_\ell)$ and of
$H^d(C)\otimes\QQ$ are the same. 
A direct consequence can be seen in the exact sequence
\[
0\to H^q(C)\otimes_{\ZZ_\ell}\FF_\ell
\to H^q(C\otimes^L_{\ZZ_\ell}\FF_\ell)\to
\mathrm{Tor}_1(H^{q+1}(C),\FF_\ell)\to 0\, .
\]
This yields an isomorphism (an injective linear map from a space that has at least
the dimension of the codomain)
\[
H^q(C)\otimes_{\ZZ_\ell}\FF_\ell
\cong H^q(C\otimes^L_{\ZZ_\ell}\FF_\ell)
\]
for all $q$ (for $q\neq d$ this is even easier)
and tells us that $H^{q+1}(C)$ is free for all $q$.
Finally, we must have $H^q(C)\cong 0$ for any $q\neq d$
and $H^d(C)\cong\ZZ^r_\ell$ with $r$ the dimension of
$H^d(C)\otimes\QQ$ as a $\QQ_\ell$-vector space.

On constructible objects, the $\ell$-adic realization functor
\[
r_\ell\colon\DM_c(Y)\to D^b_c(Y,\ZZ_\ell)
\]
induces the realization functors
\[
\DM_c(Y)\to D^b_c(Y,\FF_\ell)
\quad \text{and}\quad \DM_c(Y)\to D^b_c(Y,\ZZ_\ell)\otimes\QQ=D^b_c(Y,\QQ_\ell)
\]
by the formul\ae
\[
M\mapsto r_\ell(M)\otimes^L_{\ZZ_\ell}\FF_\ell
\quad\text{and}\quad
M\mapsto r_\ell(M)\otimes\QQ
\]
respectively. Therefore, by virtue of Theorem \ref{thm:Beilinson}
for $D^b_c(-,\QQ_\ell)$
and $D^b_c(-,\FF_\ell)$, we see that we can
apply the discussion above with $C$ be the $\ell$-adic
realization of the motive $a_*j_!j^*\ZZ_X[d]$ in
the stable $\infty$-category
$D^b_c(\spec k,\ZZ_\ell)\cong\mathit{Perf}(\ZZ_\ell)$.
This proves the last part
of the lemma.
\end{proof}
\begin{rem}\label{rem:estimation}
Let $(X,Z)$ be a Lefschetz pair, with $X$ of dimension $d$
over a separably closed field.
Let $j\colon X-Z\to X$ be the corresponding embedding.
Given any prime number $\ell$ distinct from the
characteristic of the ground field, we have
a long exact sequence
\[
\cdots\to H^q_\et(X,j_!\ZZ_\ell)\to H^q_\et(X,\ZZ_\ell)
\to H^q_\et(Z,\ZZ_\ell)\to H^{q+1}_\et(X,j_!\ZZ_\ell)\to \cdots
\]
By virtue of the preceding lemma, this yields isomorphisms
\[
H^q_\et(X,\ZZ_\ell)\cong H^q_\et(Z,\ZZ_\ell)\quad\text{for $q\neq d, d-1$,}
\]
and an exact sequence of the form below.
\[
0\to H^{d-1}_\et(X,\ZZ_\ell)
\to H^{d-1}_\et(Z,\ZZ_\ell)\to \ZZ_\ell^{\mathit{rk}(X,Z)}\to
H^{d}_\et(X,\ZZ_\ell)\to 0
\]
The map $H^{d-1}_\et(X,\ZZ_\ell)
\to H^{d-1}_\et(Z,\ZZ_\ell)$ is thus a split monomorphism inducing an isomorphism
on torsion subgroups
$H^{d-1}_\et(X,\ZZ_\ell)_{\mathit{tors}}
\cong H^{d-1}_\et(Z,\ZZ_\ell)_{\mathit{tors}}$.
\end{rem}
\begin{thm}\label{thm:Betti estimation}
Let $X$ be an affine scheme of Krull dimension $d$,
of finite type over a separably closed
field $k$, and
equipped with a Lefschetz filtration
\[
\varnothing=X_{-1}\subset X_0\subset\ldots\subset X_{d-1}\subset X_d=X\, .
\]
For any prime $\ell$ distinct from the characteristic of $k$, and
$0\leq q\leq d$, we have:
\[
\mathit{rk}\, H^q(X,\ZZ_\ell)
\leq \mathit{rk}\, H^q(X_q,\ZZ_\ell)
\leq\mathit{rk}(X_q,X_{q-1})\, .
\]
\end{thm}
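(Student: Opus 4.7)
The plan is to derive both inequalities from Lemma \ref{lemma:Lefschetz pairs are concentrated} by iterating the long exact sequence of Remark \ref{rem:estimation} along the steps of the Lefschetz filtration. All cohomology is $\ell$-adic with coefficients in $\ZZ_\ell$; I drop the subscript \'et.

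For the second inequality, I apply Remark \ref{rem:estimation} directly to the Lefschetz pair $(X_q, X_{q-1})$, whose top dimension is exactly $q$. The exact sequence at the bottom of that remark reads
\[
0\to H^{q-1}(X_q)\to H^{q-1}(X_{q-1})\to \ZZ_\ell^{\mathit{rk}(X_q,X_{q-1})}\to H^{q}(X_q)\to 0,
\]
so $H^{q}(X_q)$ is a quotient of a free $\ZZ_\ell$-module of rank $\mathit{rk}(X_q,X_{q-1})$, giving $\mathit{rk}\,H^q(X_q,\ZZ_\ell)\leq\mathit{rk}(X_q,X_{q-1})$.

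For the first inequality I climb the filtration from $X_q$ up to $X_d=X$. For each index $i$ with $q<i\leq d$, Remark \ref{rem:estimation} applied to the Lefschetz pair $(X_i,X_{i-1})$ (which has top dimension $i$) yields isomorphisms
\[
H^p(X_i,\ZZ_\ell)\cong H^p(X_{i-1},\ZZ_\ell)\quad\text{for all }p\neq i,i-1,
\]
together with a split monomorphism
\[
H^{i-1}(X_i,\ZZ_\ell)\hookrightarrow H^{i-1}(X_{i-1},\ZZ_\ell).
\]
Taking $p=q$: for every step with $i\geq q+2$ one has $H^q(X_i,\ZZ_\ell)\cong H^q(X_{i-1},\ZZ_\ell)$, while at the final step $i=q+1$ the split injection $H^q(X_{q+1},\ZZ_\ell)\hookrightarrow H^q(X_q,\ZZ_\ell)$ applies. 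Composing these isomorphisms and the one injection gives a split injection
\[
H^q(X,\ZZ_\ell)=H^q(X_d,\ZZ_\ell)\hookrightarrow H^q(X_q,\ZZ_\ell),
\]
whence $\mathit{rk}\,H^q(X,\ZZ_\ell)\leq\mathit{rk}\,H^q(X_q,\ZZ_\ell)$. When $q=d$ the first inequality is trivial since $X_q=X$.

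There is no real obstacle: the only point worth a sentence is that Remark \ref{rem:estimation} is available at every stage of the filtration because each pair $(X_i,X_{i-1})$ is a Lefschetz pair by construction of the Lefschetz filtration (Definition \ref{def:Lefschetz filtration}), and the rank statement of Lemma \ref{lemma:Lefschetz pairs are concentrated} is independent of $\ell\neq\mathrm{char}(k)$, so the argument runs uniformly in $\ell$.
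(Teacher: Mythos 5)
Your argument is correct and is exactly what the paper's one-line proof ("this follows right away from Lemma \ref{lemma:Lefschetz pairs are concentrated} and Remark \ref{rem:estimation} by induction on $d$") is gesturing at: the second inequality is the surjection $\ZZ_\ell^{\mathit{rk}(X_q,X_{q-1})}\twoheadrightarrow H^q(X_q)$ from the four-term exact sequence, and the first is the composite $H^q(X_d)\cong\cdots\cong H^q(X_{q+1})\hookrightarrow H^q(X_q)$ obtained by descending the filtration, with the one split monomorphism occurring at the step $i=q+1$. You have merely unwound the induction into an explicit iteration; the content is the same.
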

\begin{proof}
This follows right away
from Lemma \ref{lemma:Lefschetz pairs are concentrated}
and from Remark \ref{rem:estimation} by induction on $d$.
\end{proof}
\begin{rem}
Together with Proposition \ref{prop:existence Lefschetz pairs},
this gives uniform bounds of Betti numbers
for all quasi-projective schemes over a field,
using Jouanolou's trick \cite[Lemme 1.5]{Jou}: the latter proves
that any quasi-projective scheme is
$\mathbf{A}^1$-homotopy equivalent to an affine one.

Lefschetz pairs may be used
to prove the existence of uniform bounds of Betti numbers of
any constructible motive; see Corollary \ref{cor:bound Betti numbers of motives}
below.
\end{rem}
\begin{paragr}
We can interpret the existence of Lefschetz filtrations as an approximation
of the motivic $t$-structure on $\DM(X)$ (whose existence would imply
Grothendieck's standard conjectures \cite{Standard}). In order to simplify our task,
we will work with $\QQ$-linear coefficients (it is straightforward to produce an
integral version from any $\QQ$-linear one using the rigidity theorem).

We observe that objects of the form $M(X)(n)$, with $X$ affine of finite type over
the base field $k$ and $n\in\ZZ$ are compact and generate $\DM(\spec k)$ as
a compactly generated stable $\infty$-category.
If $k$ is infinite, this implies that the objects of the form $M(X,Z)(n)$
with $(X,Z)$ a Lefschetz pair and $n\in\ZZ$ also form a generating
family. In order to allow any field, we proceed as follows.
\end{paragr}
\begin{prop}\label{prop:Lefschetz enough}
Let $k$ be any field.
We define the family $\mathcal{G}_k$ as follows:
this is the family of motives of the form $f_*M(X,Z)[-\dim X](n)$,
where $f\colon \spec K\to\spec k$ is a Galois extension of the base field,
with $(X,Z)$ a Lefschetz pair defined over $K$, and $n\in\ZZ$.
Then $\mathcal{G}_k$ is a family of compact generators of $\DM(k,\QQ)$
as a cocomplete stable $\infty$-category.
\end{prop}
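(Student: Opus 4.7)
The plan is to reduce to the infinite-field case by Galois descent with rational coefficients. Recall that $\DM(\spec{k},\QQ)$ is compactly generated by the motives $M(X)(n)$ for $X$ of finite type over $k$ and $n\in\ZZ$; by Jouanolou's trick every such $M(X)$ is isomorphic to the motive of an affine scheme, so it suffices to show that $M(X)(n)$ lies in the localizing subcategory $\langle\mathcal{G}_k\rangle$ generated by $\mathcal{G}_k$ (under shifts, cofibers, coproducts and retracts) whenever $X$ is affine of finite type over $k$.

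Fix such an $X$ of dimension $d$. Iterating Proposition \ref{prop:existence Lefschetz pairs} at most $d$ times produces a finite extension of $k$ after which $X$ admits a Lefschetz filtration; replacing this extension by its Galois closure yields a finite Galois extension $f\colon\spec{K}\to\spec{k}$ and a Lefschetz filtration
\[
\varnothing=X_{K,-1}\subset X_{K,0}\subset\cdots\subset X_{K,d}=X_K
\]
of $X_K=X\times_k K$, with $\dim X_{K,i}=i$. The resulting cofiber sequences $M(X_{K,i-1})\to M(X_{K,i})\to M(X_{K,i},X_{K,i-1})$ in $\DM(\spec{K},\QQ)$ exhibit $M(X_K)$ as an iterated extension of the relative motives $M(X_{K,i},X_{K,i-1})$. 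Applying the exact functor $f_*$ places $f_*M(X_K)$ in the thick subcategory of $\DM(\spec{k},\QQ)$ generated by the objects $f_*M(X_{K,i},X_{K,i-1})$, each of which is a shift of $f_*M(X_{K,i},X_{K,i-1})[-i]\in\mathcal{G}_k$ and therefore lies in $\langle\mathcal{G}_k\rangle$.

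The final step is to recover $M(X)$ itself. By base change, $f^*M(X)\cong M(X_K)$, so $f_*f^*M(X)\cong f_*M(X_K)\in\langle\mathcal{G}_k\rangle$. Since $f$ is finite \'etale of degree $[K:k]$, which is invertible in $\QQ$, the composite of the unit $\unit\to f_*f^*\unit$ with the trace $f_!f^*\unit=f_*f^*\unit\to\unit$ is multiplication by $[K:k]$; thus $M(X)$ is a direct summand of $f_*M(X_K)$ and hence lies in $\langle\mathcal{G}_k\rangle$. Tate twisting gives the analogous conclusion for $M(X)(n)$. Compactness of each element of $\mathcal{G}_k$ follows from the fact that $M(X,Z)$ is constructible in $\DM(\spec{K},\QQ)$, that shifts and Tate twists preserve constructibility, and that $f_*=f_!$ preserves compact objects when $f$ is finite \'etale (because $f^!=f^*$ is a left adjoint, hence colimit-preserving). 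The main subtlety is the $\QQ$-linear Galois descent producing $M(X)$ as a retract of $f_*f^*M(X)$; this is the step where the rational-coefficients hypothesis is essential, and the rest of the argument is essentially bookkeeping.
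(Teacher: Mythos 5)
Your proof is correct, but it takes a genuinely different route from the paper at the crucial reduction step. The paper handles the general base field $k$ by passing to a separable closure $\bar k$: it invokes conservativity of the pullback $\DM(\spec k,\QQ)\to\DM(\spec{\bar k},\QQ)$ together with the fact that this pullback commutes with $\uHom(A,-)$ for compact $A$, and then concludes via a continuity argument (\cite[Lemma 4.3.7 and Prop.~4.3.1]{CD3}) that generation of $\DM(\spec{\bar k},\QQ)$ by $\mathcal G_{\bar k}$ implies generation of $\DM(\spec k,\QQ)$ by $\mathcal G_k$. You instead work entirely at finite level: you produce a finite Galois extension $K/k$ over which a Lefschetz filtration exists, use the exact sequence and $f_*$ to place $f_*M(X_K)$ in $\langle\mathcal G_k\rangle$, and then recover $M(X)$ as a direct summand of $f_*f^*M(X)$ via the standard finite-\'etale transfer (unit composed with trace equals multiplication by the degree, invertible in $\QQ$). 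Both arguments rely on $\QQ$-linearity of coefficients; yours is more explicit and self-contained, while the paper's is shorter but relies on external machinery from \cite{CD3}. One small point you elide and that is worth stating: you iterate Proposition~\ref{prop:existence Lefschetz pairs} through a tower of finite extensions and then pass to a Galois closure, so you are implicitly using that the property of being a Lefschetz subscheme is stable under further base change along a (separable) finite extension. This holds because the transversality locus $U\subset\PP^\wedge$ of Theorem~\ref{thm:weak Lefschetz} is defined over the base, so a $K$-point of $U$ gives an $L$-point of $U$ for any extension $L/K$, and regularity of $X-W$ is preserved since the ground fields involved (finite, hence perfect) make regular equivalent to smooth, which is stable under base change; but since you present the filtration as simply descending to $K$, it would be clearer to say this.
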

\begin{proof}
The case where $k$ is infinite is obvious from
Proposition \ref{prop:existence Lefschetz pairs}:
any affine scheme has a Lefschetz filtration
and given a Lefschetz filtration by subschemes $X_i$, one can
reconstruct the motive of $X$ from the $M(X_i,X_{i-1})$'s.
Since the motives of the form $M(X)(n)$ with $X$ affine form a generating
family, this proves the claim if $k$ is infinite,
in particular for $k$ separably closed.
In general,
we know that, if $\bar k$ is a separable closure of $k$, then the pullback
functor
\[
\DM(\spec k,\QQ)\to\DM(\spec{\bar k},\QQ)
\]
is conservative and preserve the formation of internal Hom's of the form $\uHom(A,B)$
as long as $A$ is compact; see \cite[Lemma 4.3.7]{CD3}. By a continuity argument
(as in \cite[Prop.~4.3.1]{CD3}), it is now easy to see that
it suffices to prove that $\mathcal{G}_k$ generates $\DM(\spec{k},\QQ)$
if $\mathcal{G}_{\bar k}$ generates $\DM(\spec{\bar k},\QQ)$,
which we already know.
\end{proof}
\begin{cor}\label{cor:bound Betti numbers of motives}
Let $M$ be any constructible object in $\DM(\spec k,\QQ)$.
There exists a constant $C>0$ and integer $a\leq b$ with the following
properties.
For any
$\QQ$-linear Artin-Verdier six-functors formalism $D$ defined over $k$-schemes
of finite type with tannakian heart $D^\heartsuit_c(\spec k)$,
the induced functor
\[
r_D\colon \DM(\spec k,\QQ)\to D(\spec k)
\]
sends $M$ to an object $r_D(M)$ of cohomological amplitude $[a,b]$
with respect to the ordinary $t$-structure and
so that the rank of each cohomology objects $H^q(r_D(M))$
(i.e. the traces of their identity) is bounded by $C$.
\end{cor}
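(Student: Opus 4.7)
The plan is to present $M$ as a finite iterated extension of generators from $\mathcal{G}_k$ and then to analyse the realization of each generator separately, using Lemma \ref{lemma:Lefschetz pairs are concentrated} to control everything motivically. Concretely: by Proposition \ref{prop:Lefschetz enough}, the family $\mathcal{G}_k$ consists of compact generators of $\DM(\spec k,\QQ)$, so the constructible (hence compact) object $M$ belongs to the thick subcategory they generate. Hence there exist finitely many $g_1,\dots,g_N \in \mathcal{G}_k$, say $g_i = (f_i)_* M(X_i,Z_i)[-\dim X_i](n_i)$ with $f_i \colon \spec{K_i} \to \spec k$ a finite Galois extension and $(X_i,Z_i)$ a Lefschetz pair over $K_i$, such that $M$ can be built from the $g_i$ by finitely many cofiber sequences, shifts, and retracts.

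I would then show that $r_D(g_i)$ is controlled uniformly in $D$. By Lemma \ref{lemma:Lefschetz pairs are concentrated} applied over $\spec{K_i}$, the object $r_D(M(X_i,Z_i)[-\dim X_i])$ is concentrated in cohomological degree $0$ for the ordinary $t$-structure, with rank equal to the motivic invariant $\mathit{rk}(X_i,Z_i)$. Tate twists are $t$-exact by axiom (a) of \ref{def:t-struct}. Since $f_i$ is finite étale, the functor $(f_i)_* = (f_i)_!$ is $t$-exact for the ordinary $t$-structure (right $t$-exact by axiom (d); left $t$-exact as the right adjoint of the $t$-exact functor $f_i^*$), and $r_D$ commutes with $(f_i)_!$ because it is a morphism of six-functor formalisms. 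Hence $r_D(g_i)$ is concentrated in degree $0$. Its rank, namely the trace of its identity, is preserved by the symmetric monoidal functor $r_D$ and lands in $\End(\unit) = \QQ$ by the tannakian hypothesis; it therefore equals a fixed rational (in fact integral) number $c_i$ determined motivically and independent of $D$.

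Finally, exactness of $r_D$ transports the presentation of $M$ into a corresponding presentation of $r_D(M)$ from the $r_D(g_i)$. The long exact cohomology sequence for a cofiber sequence shows that ranks add and amplitudes combine, shifts merely translate cohomological degree, and retracts only decrease ranks; induction on the number of construction steps therefore yields integers $a \le b$ and a constant $C > 0$, depending only on the chosen presentation of $M$ (the $c_i$'s and the shifts involved) and not on $D$, so that $r_D(M)$ has cohomology in $[a,b]$ with $\mathit{rk}\, H^q(r_D(M)) \le C$ for every $q$.

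The main obstacle — the uniformity across \emph{all} Artin-Verdier formalisms $D$ — is precisely what is absorbed into Lemma \ref{lemma:Lefschetz pairs are concentrated}: the concentration and the rank of the realization of a Lefschetz pair are motivic invariants, resting ultimately on Beilinson's transversality theorem (Theorem \ref{thm:Beilinson}). Once this is in hand, the remainder is bookkeeping with the six operations and the standard long exact sequences in an abelian category.
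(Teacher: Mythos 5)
Your argument follows the paper's proof step for step: both reduce via Proposition~\ref{prop:Lefschetz enough} to the case of a single generator $f_*M(X,Z)[-\dim X](n)$, both invoke Lemma~\ref{lemma:Lefschetz pairs are concentrated} for concentration and motivic rank, and both finish by exactness of $f_*$ for finite $f$. You have merely filled in details (the $t$-exactness bookkeeping, the long exact sequence accounting) that the paper leaves implicit.
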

\begin{proof}
The preceding proposition means that
such an $M$ is obtained by finitely many operations
(such as extracting a direct factor in a finite coproducts, taking cofibers, shifting
by some integer) from finitely many objects of the form
$f_*M(X,Z)[-\dim X](n)$,
where $f\colon \spec K\to\spec k$ is a Galois extension of the base field,
with $(X,Z)$ a Lefschetz pair defined over $K$, and $n\in\ZZ$.
Therefore, it suffices
to prove the result for $M=f_*M(X,Z)[-\dim X](n)$.
Since $f_*$ is an exact functor for $f$ finite, we conclude
with Lemma \ref{lemma:Lefschetz pairs are concentrated}.
\end{proof}
\begin{paragr}
We call \emph{standard generators} objects of the form
\[
A^\wedge_1\otimes\cdots\otimes A^\wedge_m\otimes B_1\otimes\cdots\otimes B_n
\]
where each $A_i$ and $B_j$ are in $\mathcal{G}_k$ and $(-)^\wedge$ is the duality operator on dualizable objects in $\DM(\spec k,\QQ)$.
We define $\DM^{\leq 0}(\spec k,\QQ)$ as the smallest full subcategory
of $\DM(\spec k,\QQ)$ stable under small colimits and stable under
extensions spanned by standard generators. This is a compactly
generated presentable $\infty$-category. Therefore, the inclusion functor
\[
\DM^{\leq 0}(\spec k,\QQ)\hookrightarrow\DM(\spec k,\QQ)
\]
has a right adjoint
\[
\tau^{\leq 0}\colon\DM(\spec k,\QQ)\to\DM^{\leq 0}(\spec k,\QQ)
\]
We define $\DM^{\geq 0}(\spec k,\QQ)$ as the full subcategory of objects $B$ such that
\[
\map(A[1],B)=0
\]
for any $A$ in $\DM^{\leq 0}(\spec k,\QQ)$. This defines a $t$-structure on
$\DM(\spec k,\QQ)$. By construction
(and thanks to Lemma \ref{lemma:Lefschetz pairs are concentrated}), for any
$\QQ$-linear Artin-Verdier six-functors formalism $D$ defined over $k$-schemes
of finite type, the induced functor
\[
r_D\colon \DM(\spec k,\QQ)\to D(\spec k)
\]
is right $t$-exact and induces a symmetric monoidal colimit preserving functor
\[
r_D\colon \DM^{\leq 0}(\spec k,\QQ)\to D^{\leq 0}(\spec k)\, .
\]
This $t$-structure on $\DM(\spec k,\QQ)$ is an approximation
of the conjectural motivic $t$-structure - if we are optimistic, we could
hope it is the right one. We would like to do something similar
in $\DM(Y,\QQ)$ for any $Y$ of finite type over a field:
approximating the perverse $t$-structure. For this, we need
to define a relative notion of Lefschetz pairs.
\end{paragr}
\begin{paragr}
Let $Y$ be a regular scheme of finite type over $k$.
An affine $Y$-scheme of finite type $X$ is \emph{admissible} if
the structural map $a\colon X\to Y$ is flat and
if the motivic sheaf $a_*(\QQ_X)$ is dualizable in $\DM(Y,\QQ)$
(up to replacing $Y$ by a dense open subscheme, any flat
scheme over $Y$ is admissible; by Grothendieck's generic flatness
theorem, this means that, up to replacing $Y$ by a dense open subscheme,
any $Y$-scheme of finite type can be made admissible).

Given an admissible $Y$-scheme $X$
with structural morphism $a\colon X\to Y$, we define
\[
M_Y(X)=a_*(\ZZ_X)^\wedge \, .
\]
Given a closed subscheme $Z$ of $X$ that is admissble over $Y$ as well,
we define $M_Y(X,Z)$ by a cofiber sequence
\[
M_Y(Z)\to M_Y(X)\to M_Y(X,Z)
\]
\end{paragr}
\begin{defn}
Let $Y$ be a regular scheme of finite type over $k$.
Let $a\colon X\to Y$ be flat morphism between $k$-schemes of finite type
of relative dimension $d$.
A \emph{Lefschetz subscheme of $X$ over $Y$} is a closed subscheme $Z$ of $X$
that is flat over $Y$, of the form $W\cup H$, where $W$ is a closed
subscheme of $X$, with affine complement, and
that if flat over $Y$, with $\dim(W)=\dim(X)-1$,
which contains all irreducible components of $X$ of dimension $\dim(X)$,
such that $X - W$ is regular,
and $H$ is an hyperplane section that is transverse to the
motivic sheaf $j_!\, j^*\ZZ_X$
in the sense of \ref{def:good position}, where $j\colon X- W\to X$ is the
embedding of the complement of $W$. We will then say that $(X,Z)$
is a Lefschetz pair over $Y$. A Lefschetz pair $(X,Z)$ is
\emph{admissible} over $Y$ if both $X$ and $Z$ are admissible.
\end{defn}
\begin{paragr}
Let $a\colon X\to Y$ be the structural morphism of the admissible affine
$Y$-scheme $X$.
We choose an affine open embedding $e\colon X\to\bar X$ followed by a projective
morphism $\bar a\colon \bar X\to Y$ with $a=\bar a \, e$.
Possibly after shrinking $Y$, we may assume that $\bar X$ is flat over $Y$.
Let us consider an admissible Lefschetz subscheme $Z$ in $X$, and
write $p\colon \bar X -\bar H\to Y$ for the structural map of the complement
of the chosen hyperplane section $H$.
We have
\[
a_*\, j_!\, \QQ_{X-Z}\cong p_!\, e_*\, j_!\QQ_{X-W}\, .
\]
We will say that $(X,Z)$ is \emph{tight} if the conclusion of
Proposition \ref{prop:basic vanishing} holds without shrinking $Y$
any further
for the realizations of $a_*\, j_!\, \QQ_{X-Z}$ and of $p_!\, e_*\, j_!\QQ_{X-W}$
through any $\QQ$-linear Artin-Verdier six-functors formalism $D$.
The aforementioned proposition says that, given any
admissible Lefschetz pair $(X,Z)$ over $Y$,
there always exists a dense
open subscheme of $Y$ over which $(X,Z)$ becomes tight.
\end{paragr}
\begin{prop}\label{prop:tight nice}
If $(X,Z)$ is a tight admissible Lefschetz pair over a regular scheme $Y$,
with $X$ of relative dimension $d$ over $Y$, 
then, for any $\QQ$-linear Artin-Verdier six-functors formalism $D$,
the induced functor
\[
r_D\colon \DM(Y,\QQ)\to D(Y)
\]
sends
\[
M_Y(X,Z)^\wedge[d]\cong a_*\, j_!\, \QQ_{X-Z}[d]
\]
to a dualizable object that is concentrated in degree $0$
for the ordinary $t$-structure in $D(Y)$.
\end{prop}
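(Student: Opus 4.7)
The plan is to verify the two assertions of the proposition separately: dualizability of $r_D(M_Y(X,Z)^\wedge[d])$ in $D(Y)$, and its concentration in cohomological degree $0$ for the ordinary $t$-structure.

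For dualizability, I would use the defining cofiber sequence $M_Y(Z)\to M_Y(X)\to M_Y(X,Z)$ in $\DM(Y,\QQ)$: admissibility of $(X,Z)$ forces both $M_Y(X)=a_*(\QQ_X)^\wedge$ and $M_Y(Z)$ to be dualizable, and dualizable objects form a thick subcategory, so $M_Y(X,Z)^\wedge[d]$ is dualizable as well. Since $r_D$ is symmetric monoidal and colimit-preserving, it sends dualizable objects to dualizable ones.

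For the concentration in degree $0$, I would first set up the transversality identification
\[
a_*\, j_!\, \QQ_{X-Z}\cong p_!\, \bar\jmath^*\, e_*\, j_{W,!}\, \QQ_{X-W}
\]
in $\DM(Y,\QQ)$, where $j\colon X-Z\to X$ and $j_W\colon X-W\to X$ are the open immersions and $\bar\jmath\colon \bar X-\bar H\to\bar X$ is the complement of the hyperplane section. This follows from Theorem \ref{thm:weak Lefschetz} applied to $j_{W,!}\QQ_{X-W}$, combined with smooth base change along $\bar\jmath$ and the identity $\bar a_*\bar\jmath_!=p_!$ coming from the properness of $\bar a$. Under $r_D$, the two sides realize to the same object of $D(Y)$. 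The tightness of $(X,Z)$ then furnishes two complementary bounds on this common object: from the left-hand description, Proposition \ref{prop:basic vanishing}(a) gives $r_D(a_*j_!\QQ_{X-Z}[d])\in D^{\leq 0}_c(Y)$, while from the right-hand description, Proposition \ref{prop:basic vanishing}(b) gives the dual bound in $D^{\geq 0}_c(Y)$. Combined, this places the realized object in the heart of the ordinary $t$-structure.

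The main obstacle will be verifying that the hypotheses of Proposition \ref{prop:basic vanishing} are genuinely met so that the tightness condition applies cleanly. For part (a) this is immediate, since $j_!\unit_{X-Z}$ lies in $D^{\leq 0}_c(X)$. For part (b) one needs to check that the realized sheaf on $\bar X-\bar H$ lies in $D^{\geq 0}_c$ with its shift by $\dim X$ perverse. Using the identification $\bar\jmath^*e_*\cong e''_*j^*$ from smooth base change and $j^*j_{W,!}\cong j'_!$ (with $j'\colon X-Z\to X-H$ and $e''\colon X-H\to\bar X-\bar H$), this reduces to observing that $X-Z$ is regular of pure dimension $\dim X$ -- because $W$ contains the singular locus and the lower-dimensional irreducible components of $X$ with $X-W$ regular, while $H$ is a hyperplane section of codimension $1$ -- so that $\unit_{X-Z}[\dim X]$ is perverse by Lemma \ref{lemma:dualizable hearts}, and then propagating through $j'_!$ and $e''_*$, both perverse $t$-exact by Corollary \ref{cor:quasi-finite perverse exact} since $j'$ and $e''$ are affine quasi-finite open immersions (the affineness of $e''$ coming from $X-H$ and $\bar X-\bar H$ being affine as complements of hyperplane sections).
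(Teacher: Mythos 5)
Your argument for the concentration in degree $0$ is correct and matches the paper's own (very terse) proof, which simply invokes Theorem \ref{thm:Beilinson} and Lemma \ref{lemma:Lefschetz pairs are concentrated}: you run Proposition \ref{prop:basic vanishing}~(a) on $a_*\, j_!\,\QQ_{X-Z}$ for the upper bound and \ref{prop:basic vanishing}~(b) on $p_!\, e''_*\, j'_!\,\QQ_{X-Z}$ for the lower bound, using the transversality isomorphism to identify the two, with tightness removing the need to shrink $Y$. The checks of the hypotheses of \ref{prop:basic vanishing}~(b) -- that $X-Z$ is of pure dimension $\dim X$ and regular, that $\unit_{X-Z}[\dim X]$ is perverse by Lemma \ref{lemma:dualizable hearts}, that $j'$ and $e''$ are affine open immersions so that $j'_!$ and $e''_*$ are perverse $t$-exact by Corollary \ref{cor:quasi-finite perverse exact}, and that $p$ is flat affine of relative dimension $d$ after the initial shrinking of $Y$ -- are all correct.

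The dualizability part is where I would push back. You deduce it from ``dualizable objects form a thick subcategory'', but this is not an automatic fact in a symmetric monoidal stable $\infty$-category: dualizable objects are closed under retracts, shifts, duals and tensor products, but stability under cofibers requires an argument. (It happens to hold for $D^b_c(Y,\QQ_\ell)$ over a regular connected $Y$ because dualizable $=$ lisse and lisse $\QQ_\ell$-sheaves form an abelian subcategory stable under extensions, but you are making the claim in $\DM(Y,\QQ)$, where I do not see an immediate justification, nor is it part of the axiomatics in \ref{def:6FF} or \ref{def:Artin-Verdier}.) The intended source of dualizability is the \emph{tightness} hypothesis itself: in the proof of Proposition \ref{prop:basic vanishing}~(a), applied to $F=j_!\,\QQ_{X-Z}$ with the d\'evissage taken along $j$ itself (so that $i^*F=0$ and the cofiber sequence collapses), one shrinks $Y$ precisely in order to make $a_*\, j_!\,\QQ_{X-Z}$ dualizable before invoking Lemma \ref{lemma:dualizable hearts}; tightness stipulates that this already holds over all of $Y$. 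Phrased that way, dualizability does not need the cofiber sequence or any closure property of dualizable objects at all, and it is what the paper's ``direct consequence of the definition'' is pointing at. Either supply a proof that dualizable objects are stable under cofibers in $\DM(Y,\QQ)$ over a regular base, or replace that step by reading off dualizability from the tightness condition as above.
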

\begin{proof}
This is a direct consequence of the definition,
exactly as in the proof of Theorem \ref{thm:Beilinson} and of
Lemma \ref{lemma:Lefschetz pairs are concentrated}.
\end{proof}
\begin{paragr}
Let $Y$ be a scheme of finite type over $k$
with $Y_\mathit{red}$ regular, and let $\imath\colon Y_\mathit{red}\to Y$
be the canonical embedding.
We denote by $\mathcal{G}_Y$ the family of objects of the form
\[
\imath_*\, f_* M_{Y'}(X',Z')(n)[-d]
\]
with $f\colon Y'\to Y_{\mathit{red}}$ a finite flat morphism,
$Y'$ regular, $(X',Z')$ a tight admissible Lefschetz pair over $Y'$,
$d$ the relative dimension of $X$ over $Y'$
(seen as a locally constant function), and $n$ any integer.
As we did previously we define \emph{standard generators} over $Y$
objects of the form
\[
A^\wedge_1\otimes\cdots\otimes A^\wedge_m\otimes B_1\otimes\cdots\otimes B_n
\]
where each $A_i$ and $B_j$ are in $\mathcal{G}_Y$ and $(-)^\wedge$ is the duality operator on dualizable objects in $\DM(Y,\QQ)$.

For a scheme of finite type $X$ over $k$, we define the
\emph{perverse generating family} $P(X)$ by noetherian induction on dimension
as follows.
For $\dim X\leq 0$, $P(X)=\mathcal{G}_X$.
For $\dim X>0$, $P(X)$ is the collections of objects of the following two
possible forms:
\begin{itemize}
\item $i_*(M)$ for $i\colon Z\to X$ a nowhere dense closed immersion
and $M$ in $P(Z)$;
\item $j_!M[\dim Y]$ for $j\colon Y\to X$ an affine dense open immersion with
$Y_\mathit{red}$ affine and regular, and $M$ is a standard generator over $Y$.
\end{itemize}
\end{paragr}
\begin{prop}\label{prop:existence tight Lefschetz pairs}
The family $P(X)$ forms a family of compact generators in $\DM(X,\QQ)$.
\end{prop}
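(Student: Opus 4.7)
The plan is to proceed by noetherian induction on the dimension of $X$, using a recollement argument at each step, with the base case over fields reducing to Proposition \ref{prop:Lefschetz enough}. Compactness of every object in $P(X)$ is immediate: in $\mathcal{G}_Y$ all constituent operations (Tate twists, duality on dualizable objects, pushforward along admissible-flat, finite-flat and closed maps, tensor products) preserve compactness among constructible motives, and in the recursive clauses both $i_*$ for closed immersions and $j_!$ for open immersions preserve compactness. Thus the only real task is to show that $P(X)$ detects the zero object.

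For the base case $\dim X\leq 0$, reducing to the case $X=\spec K$ for a single field $K$, I would show that the family of $f_*M_{Y'}(X',Z')(n)[-d]$ with $Y'\to X$ finite flat and regular recovers, up to finite-flat pushforwards between zero-dimensional regular schemes, the family $\mathcal{G}_K$ of Proposition \ref{prop:Lefschetz enough}. Since every finite separable extension of $K$ is dominated by a Galois one, this is enough.

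For the inductive step, let $\dim X>0$ and assume the statement for all $k$-schemes of smaller dimension. Choose a dense affine open $j\colon U\hookrightarrow X$ with $U_{\mathit{red}}$ regular, and let $i\colon Z\hookrightarrow X$ be the closed complement, so $\dim Z<\dim X$. For any $F\in\DM(X,\QQ)$ the localization triangle $j_!j^*F\to F\to i_*i^*F$ reduces generation to objects $i_*H$ with $H\in\DM(Z,\QQ)$ and $j_!G$ with $G\in\DM(U,\QQ)$. The former are handled by the induction hypothesis on $Z$ and the closed-immersion clause of $P(X)$. For the latter, since $j_!$ preserves colimits, it suffices to prove that standard generators over $U$, shifted by $\dim U$, generate $\DM(U,\QQ)$; these are precisely the objects appearing in the second clause of $P(X)$.

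The main obstacle is this relative generation statement, which is a version of Proposition \ref{prop:Lefschetz enough} over the regular affine base $U$. I would prove it by a secondary noetherian induction on $\dim U$. At the generic point $\eta$ of $U$, Proposition \ref{prop:Lefschetz enough} already gives generation of $\DM(\eta,\QQ)$ by $\mathcal{G}_\eta$. Each such generator spreads out to an admissible Lefschetz pair defined over some dense open $V\subseteq U$; by Proposition \ref{prop:basic vanishing} a further shrinking of $V$ makes the pair tight, and after passing to a finite flat cover arising as a Galois closure of the underlying extension of function fields, the resulting object sits in $\mathcal{G}_V$. A continuity argument along the lines used at the end of Proposition \ref{prop:Lefschetz enough}, relying on conservativity and compact-preservation of the pullback to the generic point, then lifts generation at $\eta$ to generation on $V$. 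The complement $U\setminus V$ has strictly smaller dimension, so the recollement along it, combined with the outer induction hypothesis and the closed-immersion clause, completes the proof. The delicate point is to arrange spreading out, tightness, and finite-flat covers simultaneously so that the resulting generators actually land in $\mathcal{G}_V$ as defined, rather than only in some enlargement.
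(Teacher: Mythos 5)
Your proof takes essentially the same route as the paper: base case over a field via Proposition~\ref{prop:Lefschetz enough}, generic abundance of tight admissible Lefschetz pairs (from Theorem~\ref{thm:weak Lefschetz}, Proposition~\ref{prop:basic vanishing}, and generic flatness), and then a noetherian-induction/recollement argument that the paper compresses into a citation of \cite[Prop.~4.3.17]{CD3} together with continuity. What you call the ``delicate point'' -- arranging spreading-out, tightness, and the finite flat Galois cover simultaneously so the resulting generator really lands in $\mathcal{G}_V$ -- is resolvable by further shrinking $V$ (each of those three conditions is achieved on a dense open, and the complement is absorbed by the outer induction on dimension), and is precisely what the continuity formalism of \emph{loc.\ cit.} is built to package.
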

\begin{proof}
We already know this is true in the case where $X$ is the spectrum of a field.
The case where $X$ is of dimension $\leq 0$ follows right away.
Theorem \ref{thm:weak Lefschetz} together with Proposition
\ref{prop:basic vanishing} and Grothendieck's
generic flatness theorem ensure that tight admissible Lefschetz pairs
exist in abundance generically.
The proposition follows easily from \cite[Prop.~4.3.17]{CD3} applied
to $\DM$ by continuity arguments.
\end{proof}
\begin{paragr}\label{paragr:t-structures}
We define $^p\DM^{\leq 0}(X,\QQ)$ as the smallest full subcategory
of $\DM(X,\QQ)$ stable under small colimits and stable under
extensions spanned by $P(X)$. This is a compactly
generated presentable $\infty$-category. Therefore, the inclusion functor
\[
^p\DM^{\leq 0}(X,\QQ)\hookrightarrow\DM(\spec k,\QQ)
\]
has a right adjoint
\[
\tau^{\leq 0}\colon\DM(X,\QQ)\to{^p\DM^{\leq 0}(X,\QQ)}
\]
We define $^p\DM^{\geq 0}(X,\QQ)$ by orthogonality
as the full subcategory of objects $B$ such that
\[
\map(A[1],B)=0
\]
for any $A$ in $ ^p\DM^{\leq 0}(X,\QQ)$. This defines a
$t$-structure on $\DM(X,\QQ)$ that would be a candidate for
the perverse motivic $t$-structure.

If $D$ is any
$\QQ$-linear Artin-Verdier six-functors formalism defined over $k$-schemes
of finite type, the perverse $t$-structure on $D_c$
extends to a perverse $t$-structure on $D$ such that the embedding
\[
D_c(X)\hookrightarrow D(X)
\]
is $t$-exact for all $X$: simply define $^pD^{\leq 0}(X)$
as the smallest full subcategory
of $D(X)$ stable under small colimits and stable under
extensions spanned by perverse sheaves in $D_c(X)$.
For any $k$-scheme of finite type $X$, Proposition \ref{prop:tight nice}
imply that the induced functor
\[
r_D\colon \DM(X,\QQ)\to D(X)
\]
is right $t$-exact and induces a colimit preserving functor
\[
r_D\colon {^p\DM^{\leq 0}(X,\QQ)}\to {^pD^{\leq 0}(X)}\, .
\]
There is an analogous construction for a candidate of the
ordinary $t$-structure on $\DM(X,\QQ)$. Define $G(X)$ as follows:
$G(X)=P(X)$ for $X$ of dimension $\leq 0$, and for $X$ of dimension $d>0$,
define $G(X)$ as the collection of objecrs of the form
\begin{itemize}
\item $i_*(M)$ for $i\colon Z\to X$ a nowhere dense closed immersion
and $M$ in $G(Z)$;
\item $j_!M$ for $j\colon Y\to X$ an affine dense open immersion with
$Y_\mathit{red}$ affine and regular, and $M$ is a standard generator over $Y$.
\end{itemize}
We define $\DM^{\leq 0}(X,\QQ)$ as the smallest full subcategory
of $\DM(X,\QQ)$ stable under small colimits and stable under
extensions spanned by $P(X)$. This defines as above a $t$-structure
such that $r_D$ is right $t$-exact for any $D$
(with respect to the ordinary $t$-structure on $D$).
By construction, $\QQ_X$ belongs to $\DM^{\leq 0}(X,\QQ)$ and
the tensor product defines a functor that preserves
colimit in each variables
\[
(-)\otimes(-)\colon\DM^{\leq 0}(X,\QQ)^\mathit{op}
\times \DM^{\leq 0}(X,\QQ)
\to\DM^{\leq 0}(X,\QQ)\, .
\]
A wild optimistic guess would be that any element of $G(X)$ is in the heart
of this $t$-structure.
\end{paragr}
\bibliographystyle{amsalpha}
\bibliography{uniform}
\end{document}